\newcommand{\bbZ}{{\mathbb Z}}
\newcommand{\calN}{\mathcal{N}}
\newtheorem{theorem}{Theorem}[section]
\newtheorem{lemma}[theorem]{Lemma}
\newtheorem{corollary}[theorem]{Corollary}
\newtheorem{proposition}[theorem]{Proposition}
\theoremstyle{definition}
\newtheorem{definition}[theorem]{Definition}
\newtheorem{example}[theorem]{Example}
\newtheorem{remark}[theorem]{Remark}
\begin{document}

\title{L$^2$-invisibility and a class of local similarity groups}

\author{Roman Sauer}
\address{Karlsruhe Institute of Technology, Karlsruhe, Germany}
\email{roman.sauer@kit.edu}

\author{Werner Thumann}
\address{Karlsruhe Institute of Technology, Karlsruhe, Germany}
\email{werner.thumann@kit.edu}

\thanks{Both authors gratefully acknowledge support by the DFG grant 1661/3-1.}

\subjclass[2010]{Primary 20J05; Secondary 22D10}
\keywords{$L^2$-cohomology, Thompson's group, zero-in-the-spectrum conjecture}

\begin{abstract}
	In this note we show that the members of a certain class of local similarity groups are $l^2$-invisible,
	i.e.~the (non-reduced) group homology of the regular unitary representation vanishes in all degrees. This 
	class contains groups of type $F_{\infty}$, e.g.~Thompson's group~$V$ and Nekrashevych-R\"over groups. They 
	yield counterexamples to a generalized zero-in-the-spectrum conjecture for groups of type $F_{\infty}$.
\end{abstract}
\maketitle

\section{Introduction}\label{sec: intro}

The \emph{zero-in-the-spectrum conjecture} (or question) appears for the first time in Gromov's article 
\cite{gromov-riemannian}. It states that for an aspherical closed Riemannian manifold~$M$ there always exists
$p\geq 0$ such that the spectrum of the Laplacian $\Delta_p$ acting on the square integrable $p$-forms on the 
universal covering of $M$ contains zero. The latter is equivalent (see~\cite{lott}) to the group-homological 
statement 
\begin{equation}
	\exists_{p\ge 0}~H_p\bigl(\Gamma, \calN(\Gamma)\bigr)\ne 0, 
\end{equation}
where $\Gamma=\pi_1(M)$ and $\calN(\Gamma)$ is the group von Neumann algebra of~$\Gamma$. The zero-in-the-spectrum 
conjecture is motivated and implied by the strong Novikov 
conjecture~\citelist{\cite{gromov-riemannian}*{4.B.}\cite{lück-book}*{Theorem~12.7 on p.~443}}. We call a group 
$\Gamma$ \emph{$l^2$-invisible} if 
\[\forall_{p\ge 0}~H_p\bigl(\Gamma, \calN(\Gamma)\bigr)=0. \] 
By~\cite{lück-book}*{Lemmas~6.98 on p.~286 and~12.3 on p.~438} a group $\Gamma$ of type $F_\infty$ is 
$l^2$-invisible iff 
\[\forall_{p\ge 0}~ H_p\bigl(\Gamma, l^2(\Gamma)\bigr)=0.\]
Note that $l^2$-invisibility of a group is a much stronger property than the vanishing of its $l^2$-Betti numbers,
which is equivalent to the vanishing of the reduced homology.

A more general zero-in-the-spectrum question by Lott~\cite{lott}, where one drops the asphericity condition, was 
answered in the negative by Farber and Weinberger~\cite{farber-weinberger}. This note is concerned with an 
algebraic generalization of the zero-in-the-spectrum question, which was raised -- in different terminology -- by 
L\"uck~\citelist{\cite{durham}*{Remark~12.16}\cite{lück-book}*{Remark~12.4 on p.~440}}:  
\begin{equation}\label{eq: algebraic question}
	\text{\emph{Are there groups of type $F$ or $F_\infty$ that are $l^2$-invisible?}}
\end{equation}
Recall that a group $G$ is of type $F$ iff there is model of the classifying space $\mathrm{B}G$ with finitely 
many cells and of type $F_\infty$ iff there is a model of $\mathrm{B}G$ with finitely many cells in each 
dimension. Without any finiteness condition on the group, $l^2$-invisible groups are easily constructed by 
taking suitable infinite products (see \emph{loc.~cit.}). In the spirit of the zero-in-the-spectrum conjecture 
one might expect the answer to~\eqref{eq: algebraic question} to be negative, but, in fact, we provide here many 
examples of $F_\infty$-groups that are $l^2$-invisible. 

Hughes~\cite{hughes-haagerup} introduced a certain class of groups acting on compact ultrametric spaces which we 
call \emph{local similarity groups} for short (see Section~\ref{sec: local sim groups} for details). Assuming 
there are only finitely many $\mathrm{Sim}$-equivalence classes of balls and the similarity structure satisfies
a condition called \emph{rich in ball contractions}, these groups satisfy property $F_\infty$~\cite{farley-hughes}.
In Section \ref{sec: dual contraction} we will introduce another property for similarity structures, called 
\emph{dually contracting}, which is implied by rich in ball contractions and enables us to prove the following 
theorem in Section~\ref{sec: proof}.

\begin{theorem}\label{thm: main thm}
	Let $X$ be a compact ultrametric space with similarity structure $\mathrm{Sim}$. If $\mathrm{Sim}$ is dually
	contracting, then the local similarity group $\Gamma=\Gamma(\mathrm{Sim})$ is $l^2$-invisible. 
\end{theorem}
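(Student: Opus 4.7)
The plan is to compute $H_*(\Gamma, \calN(\Gamma))$ from an explicit projective resolution of the trivial $\bbZ\Gamma$-module and to exploit dually contracting to run an Eilenberg-swindle-style collapse on the resulting complex.

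First, I would use (or adapt) a contractible $\Gamma$-CW model for $\mathrm{E}\Gamma$ built from the action of $\Gamma$ on the directed poset of partitions of $X$ into $\mathrm{Sim}$-balls, essentially the Farley--Hughes type complex. Let $C_*$ be its cellular chain complex. The stabilizer of a cell indexed by a partition $P=\{B_1,\dots,B_k\}$ decomposes as a product of local similarity groups on the $B_i$, so each $C_n$ is a direct sum of modules induced from such product stabilizers. The target is then the complex $\calN(\Gamma)\otimes_{\bbZ\Gamma} C_*$, whose homology is $H_*(\Gamma,\calN(\Gamma))$, and one wants it to be acyclic.

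The role of dually contracting should be to provide a canonical, equivariant, functorial way of replacing each ball $B$ in a partition by a proper sub-ball $B'\subsetneq B$. Assembled together this yields a chain endomorphism $s\colon C_*\to C_*$; since both $s$ and $\mathrm{id}$ lift the identity on $\bbZ$ through a resolution, they are automatically chain homotopic. The stabilizer of the contracted partition strictly enlarges the stabilizer of the original one: the "room" freed by the contraction is filled by additional local similarities. Iterating $s$ produces a tower of stabilizers whose union is an infinite locally finite group, so $s$ should factor through inductions from subgroups over which $\calN(\Gamma)$ is cohomologically trivial; the mapping telescope of $s$ then has vanishing homology, and combining $\mathrm{id}\simeq s\simeq s^2\simeq\cdots$ with this via the standard swindle forces $H_*(\calN(\Gamma)\otimes_{\bbZ\Gamma} C_*)=0$.

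The main obstacle is exactly this middle step. Because $l^2$-invisibility is strictly stronger than vanishing of $l^2$-Betti numbers, trace or $\calN(\Gamma)$-dimension arguments are unavailable, and one must exhibit genuine module-theoretic vanishing of $H_p(H,\calN(\Gamma))$ for the subgroups $H$ through which $s$ factors. The cleanest route is to show these $H$ are infinite locally finite, so that $\calN(\Gamma)$ is flat over $\bbZ H$ and $H_p(H,\calN(\Gamma))=0$ for $p\geq 1$ with the degree-zero piece killed by averaging, and then to apply Shapiro's lemma. Translating the purely combinatorial dually contracting condition into this hands-on factorization, and in particular verifying that iterated refinement really sweeps out a locally finite subgroup inside each stabilizer while remaining compatible with the boundary maps, is where the essential technical work will lie.
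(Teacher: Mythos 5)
Your starting point (a contractible $\Gamma$-complex built from partitions of $X$, with cell stabilizers containing products of the restricted groups $\Gamma(\mathrm{Sim}|_{B_i})$) matches the paper's, and your instinct that the vanishing should be fed degreewise through stabilizers is also right. But the engine you propose for that vanishing is exactly backwards, and the swindle scaffolding built around it cannot be rescued.

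The gap is the claim that one should funnel the argument through subgroups $H$ that are infinite locally finite, using flatness over $\bbZ H$ plus an ``averaging'' in degree zero to get $H_p(H,\calN(\Gamma))=0$ for all $p$. This fails already at $p=0$: for an infinite locally finite (hence amenable) group $H$ one has $H_0(H,\calN(H))\ne 0$ by Kesten's characterization, and the paper itself records this as an obstruction --- it exhibits a similarity structure whose local similarity group is infinite locally finite precisely to show $H_0(\Gamma,\calN(\Gamma))\ne 0$, i.e.~that one \emph{cannot} drop the dually-contracting hypothesis. Flatness of $\calN(\Gamma)$ is over $\calN(H)$, not $\bbZ H$, and it preserves vanishing of $H_0$ rather than creating it. Moreover the stabilizers $\Gamma_\sigma$ in the partition complex are nothing like locally finite: the dually-contracting hypothesis makes them contain normal subgroups isomorphic to $\Gamma^n\times(\text{rest})$, and $\Gamma$ itself contains a nonabelian free group. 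That free subgroup, via Kesten, is what forces $H_0(\Gamma,\calN(\Gamma))=0$; the paper then runs a K\"unneth-type vanishing to upgrade this to $H_q=0$ for $q\le n-1$ on $\Gamma^n$, and a normality/flatness step to transport it to $\Gamma_\sigma$ with $\calN(\Gamma)$ coefficients, feeding this into Brown's equivariant homology spectral sequence $E^1_{pq}=\bigoplus_\sigma H_q(\Gamma_\sigma,\calN(\Gamma))\Rightarrow H_{p+q}(\Gamma,\calN(\Gamma))$. Since $n$ is arbitrary, all degrees vanish. So the correct role of ``dually contracting'' is double: it produces the free subgroup (ping-pong with two contractions into disjoint balls), and it makes the partition poset rich enough that stabilizers contain arbitrarily large powers of $\Gamma$. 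It is not a device for making stabilizers small or amenable, which is the opposite of what is needed.

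The Eilenberg-swindle framing also adds technical debt that the spectral-sequence route avoids: the chain endomorphism $s$, its compatibility with boundaries, the mapping telescope, and the factorization through inductions all have to be built by hand, and even if done the vanishing input fed into them would still have to be the non-amenability/K\"unneth mechanism above, not local finiteness. You would end up reproving the spectral-sequence collapse in a more awkward language. If you want to salvage the idea, drop the swindle, keep the partition complex, prove that $\Gamma$ contains a free group using the two disjoint contracted copies of $X$, invoke Kesten to get $H_0(\Gamma,\calN(\Gamma))=0$, and then use the Farber--Weinberger/L\"uck K\"unneth vanishing for products together with Brown's spectral sequence.
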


The well known Thompson group $V$ can be realized as a local similarity group which is contained in this class, 
as well as the Nekrashevych-R\"over groups $V_d(H)$ (Example~\ref{ex: thompson et al}). They are also of type 
$F_\infty$ by the results in \cite{farley-hughes}. Already Brown showed in \cite{brown-finiteness}*{Theorem~4.17} 
that $V$ is of type $F_\infty$. Unfortunately, we cannot say anything about the $F$-part of 
question~\eqref{eq: algebraic question} since the groups we consider here are easily seen to have infinite 
cohomological dimension. Indeed, as a byproduct of our argument, we obtain the following statement which implies 
infinite cohomological dimension \cite{brown-book}*{Prop.~(6.1) on p.~199 and (6.7) on p.~202}.

\begin{theorem}\label{thm: byproduct thm}
	Let $X$ be a compact ultrametric space with similarity structure $\mathrm{Sim}$. If there are only finitely 
	many $\mathrm{Sim}$-equivalence classes of balls and $\mathrm{Sim}$ is rich in ball contractions, then the 
	local similarity group $\Gamma=\Gamma(\mathrm{Sim})$ satisfies $H^\ast(\Gamma, \bbZ[\Gamma])=0$ in all degrees.  
\end{theorem}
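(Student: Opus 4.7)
Under the hypotheses of Theorem~\ref{thm: byproduct thm}, richness in ball contractions implies the dually contracting property (as noted before Theorem~\ref{thm: main thm}), so Theorem~\ref{thm: main thm} already gives that $\Gamma$ is $l^2$-invisible. The additional hypothesis of finitely many $\mathrm{Sim}$-equivalence classes of balls is what Farley--Hughes~\cite{farley-hughes} use to produce a classifying space $\mathrm{E}\Gamma$ with only finitely many $\Gamma$-orbits of cells in each dimension, so the cellular chain complex $C_\ast = C_\ast(\mathrm{E}\Gamma)$ is a free resolution of $\bbZ$ over $\bbZ\Gamma$ with each $C_n$ finitely generated, and
\[
H^n(\Gamma,\bbZ\Gamma) \;=\; H^n\bigl(\operatorname{Hom}_{\bbZ\Gamma}(C_\ast,\bbZ\Gamma)\bigr).
\]
Our task is to show that this cochain complex is acyclic.

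The plan is to revisit the contraction constructed in the proof of Theorem~\ref{thm: main thm} and argue that, under the richness hypothesis, it comes from an honest $\bbZ\Gamma$-linear chain endomorphism $\varphi\colon C_\ast \to C_\ast$ chain-homotopic to the identity, whose image pushes every cell into a proper sub-ball of $X$. Richness provides, for each ball $B$, a proper sub-ball $B' \subset B$ together with a $\mathrm{Sim}$-similarity $B \to B'$; promoting this to a cellular operation on the Farley--Hughes model yields $\varphi$. Iterating gives $\varphi^k$, again chain-homotopic to the identity, whose image concentrates into a descending chain of arbitrarily small sub-balls. Given a cocycle $f \in \operatorname{Hom}_{\bbZ\Gamma}(C_n,\bbZ\Gamma)$, the composites $f \circ \varphi^k$ represent the same cohomology class as $f$ for all $k$; I expect them to stabilize at $0$ for $k$ large because the finite generation of $C_n$ over $\bbZ\Gamma$, together with the nested push into proper sub-balls, leaves the values of $f$ on cells no room to survive.

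The main obstacle, and where the $\bbZ\Gamma$-argument diverges from the $l^2$-argument of Theorem~\ref{thm: main thm}, is this last stabilization. In the $\calN(\Gamma)$ setting one is allowed weakly convergent infinite sums, which absorb any tail; over $\bbZ\Gamma$ the vanishing $f \circ \varphi^k = 0$ must occur at a finite stage. The finiteness of $\mathrm{Sim}$-equivalence classes of balls is the combinatorial input needed to make the iteration effective in each fixed degree $n$, bounding the number of cell types encountered and forcing $f \circ \varphi^k$ to become literally zero for $k$ sufficiently large. Making this finite-stage collapse rigorous, in lieu of the weak-closure argument available for $\calN(\Gamma)$ coefficients, is the crux of the proof.
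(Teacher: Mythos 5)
Your proposal diverges from the paper's proof in both strategy and completeness, and it has a genuine gap which you yourself flag at the end.

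The paper proves Theorem~\ref{thm: byproduct thm} by reusing the architecture of the proof of Theorem~\ref{thm: main thm}: it runs the cohomological version of Brown's isotropy spectral sequence for the same contractible poset complex $Z_n$, namely $E_1^{pq}=\prod_{\sigma\in\Sigma_p}H^q(\Gamma_\sigma,\bbZ[\Gamma])\Rightarrow H^{p+q}(\Gamma,\bbZ[\Gamma])$. The role played by the $l^2$-K\"unneth lemma in Theorem~\ref{thm: main thm} is played here by the cohomological K\"unneth result (Corollary~\ref{cor: cohomological kuenneth}), which needs the $FP_\infty$ hypothesis so that the cochain complexes $\hom_{\bbZ[G_i]}(P_\ast,\bbZ[G_i])$ are levelwise finitely generated free and the cochain cross product is an isomorphism. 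One then passes from $\Lambda_\sigma'$ to $\Lambda_\sigma$ to $\Gamma_\sigma$ via two applications of the Hochschild--Lyndon--Serre spectral sequence, using that $\bbZ[\Gamma]$ is free over $\bbZ[\Lambda_\sigma']$ and that cohomology of $FP_\infty$ groups commutes with direct limits in the coefficients. The finiteness of $\mathrm{Sim}$-classes and richness enter only through Farley--Hughes's $F_\infty$ result, which feeds the K\"unneth corollary. This is not a compression argument at all.

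Your proposed route is a compression/shift argument in the spirit of Brown's treatment of $V$, and it has a fatal problem where you suspect it does. If $\varphi\colon C_\ast\to C_\ast$ is a $\bbZ\Gamma$-chain map chain-homotopic to the identity, then $\varphi^\ast$ is the identity on $H^\ast(\Gamma,\bbZ\Gamma)$, so $[f\circ\varphi^k]=[f]$ for all $k$; the equation $f\circ\varphi^k=0$ would already force $[f]=0$, which is exactly what you are trying to prove. There is no combinatorial finiteness that makes a nonzero $\bbZ\Gamma$-linear cocycle vanish on iterated compressions: nothing about ``pushing into smaller sub-balls'' constrains the values a $\bbZ\Gamma$-module homomorphism takes on orbit representatives, and your heuristic that ``the values have no room to survive'' is not an argument. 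Moreover you have not given any construction of $\varphi$ as a $\bbZ\Gamma$-linear chain map on the Farley--Hughes model, nor a chain homotopy; asserting that richness ``promotes'' the existence of small sub-balls to a cellular endomorphism is a large leap. Finally, note that what you call ``the additional hypothesis'' (finitely many $\mathrm{Sim}$-equivalence classes) is not optional window dressing: it, together with richness, is exactly the input to Farley--Hughes's $F_\infty$ theorem, without which neither your finitely generated free resolution $C_\ast$ nor the paper's K\"unneth corollary is available. In short, the proposal correctly identifies the obstruction to transferring the $\calN(\Gamma)$ argument, but the replacement idea it suggests cannot work as stated, and the paper's actual resolution is a different mechanism entirely.
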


Note that the case $\Gamma=V$ has already been treated in \cite{brown-finiteness}*{Theorem~4.21}.

\vspace{1mm}
{\it Related work}: In \cite{oguni} Oguni defines an algorithm which takes a finitely presented non-amenable 
group $G$ as input and gives a finitely presented group $G_\Psi$ with $H_p(G_\Psi,\mathcal{N}(G_\Psi))=0$ for 
all $p$. But it is not known when $G_\Psi$ is of type $F_\infty$. In fact, $G_\Psi$ is not even $F_3$ if $G$ is
a free group.

\section{Local similarity groups}\label{sec: local sim groups}

In this section we review the definition and fix the terminology for Hughes' class of local similarity groups.

Recall that	an \emph{ultrametric space} is a metric space $(X,d)$ such that
\[d(x,y)\leq\max\{d(x,z),d(z,y)\}~~\text{for all $x,y,z\in X$}.\]
In this paper, \emph{$X$ always denotes a compact ultrametric space}. The endspace of a rooted proper 
$\mathbb{R}$-tree is a compact ultrametric space and, conversely, every compact ultrametric space of diameter 
less than or equal to one is the endspace of a rooted proper $\mathbb{R}$-tree. See \cite{hughes-trees} for more 
information in this direction. By a ball in $X$, we always mean a subset of the form
\[B(x,r)=\{y\in X\ |\ d(x,y)\leq r\}\]
with $x\in X$ and $r\geq 0$. Two balls are always either disjoint or one contains the other. A non-empty subset 
is open and closed if and only if it is a union of finitely many balls. Let $X,Y$ be compact ultrametric spaces. 
A homeomorphism $\gamma:X\rightarrow Y$ is called 
\begin{itemize}
	\item an \emph{isometry} iff $d(\gamma(x_1),\gamma(x_2))=d(x_1,x_2)$ for all $x_1,x_2\in X$.
	\item a \emph{similarity} iff there is a $\lambda>0$ with $d(\gamma(x_1),\gamma(x_2))=\lambda d(x_1,x_2)$ for all $x_1,x_2\in X$.
	\item a \emph{local similarity} iff for every $x\in X$ there are balls $A\subset X$ and $B\subset Y$ with $x\in A$, $\gamma(x)\in B$ and $\gamma|_A:A\rightarrow B$ is a similarity.
\end{itemize}
The set of all local similarities on $X$ forms a group and is denoted by $LS(X)$.

\begin{definition}[\cite{hughes-haagerup}*{Definition~3.1}]
	Let $X$ be a compact ultrametric space. A \emph{similarity structure} $\mathrm{Sim}$ on $X$ (called 
	\emph{finite similarity structure} in~\citelist{\cite{farley-hughes}\cite{hughes-haagerup}}) consists of a 
	finite set $\mathrm{Sim}(B_1,B_2)$ of similarities $B_1\rightarrow B_2$ for every ordered pair of balls 
	$(B_1,B_2)$ such that the following axioms are satisfied: 
	\begin{itemize}
		\item (Identities) Each $\mathrm{Sim}(B,B)$ contains the identity.
		\item (Inverses) If $\gamma\in\mathrm{Sim}(B_1,B_2)$ then also $\gamma^{-1}\in\mathrm{Sim}(B_2,B_1)$.
		\item (Compositions) If $\gamma_1\in\mathrm{Sim}(B_1,B_2)$ and $\gamma_2\in\mathrm{Sim}(B_2,B_3)$ then 
			also $\gamma_2\circ\gamma_1\in\mathrm{Sim}(B_1,B_3)$.
		\item (Restrictions) If $\gamma\in\mathrm{Sim}(B_1,B_2)$ and $B_3\subset B_1$ is a subball then also 
			$\gamma|_{B_3}\in\mathrm{Sim}(B_3,\gamma(B_3))$.
	\end{itemize}
\end{definition}

A local similarity $\gamma:X\rightarrow X$ is locally determined by $\mathrm{Sim}$ iff for every $x\in X$ there 
is a ball $x\in B\subset X$ such that $\gamma(B)$ is a ball and $\gamma|_B\in\mathrm{Sim}(B,\gamma(B))$. The set 
of all local similarities locally determined by $\mathrm{Sim}$ forms a group, denoted by $\Gamma(\mathrm{Sim})$, 
and is called the \emph{local similarity group associated to $(X,\mathrm{Sim})$}. A group arising this way is 
called a \emph{local similarity group}.

\begin{example}[cf.~\cite{hughes-haagerup}*{Section~4}]\label{ex: thompson et al}
	We recall the alphabet terminology of the rooted $d$-ary tree. Let $A=\{a_1,...,a_d\}$ be a set of $d$ 
	letters. A word in $A$ is just an element of $A^n$ for some $n\geq 1$ or the empty word. An infinite word is 
	an element in the countable product $A^{\omega}=\prod_{\mathbb{N}}A$. The simplicial tree associated to $A$ 
	has words as vertices and an edge between to words $v,w$ iff there is an $x\in A$ with $vx=w$ or $v=wx$. The 
	root is the empty word. The endspace of this tree can be identified with the set of infinite words. It comes 
	with a natural ultrametric defined by
	\[d(x,y):=
		\begin{cases}
		0&\text{if }x=y\\
		\exp(1-n)&\text{if }n=\min\{k\ |\ x_k\neq y_k\}
		\end{cases}\]
	where $x=x_1x_2...$ and $y=y_1y_2...$ are infinite words. Since the tree is locally finite, the endspace with 
	this metric is compact. Call it $X$.\\
	Now let $H$ be a subgroup of the symmetric group $\Sigma_d$ of $A$. Define a similarity structure 
	$\mathrm{Sim}$ on $X$ as follows. If $B_1$ and $B_2$ are balls of $X$ then there are unique words $w_1$
	 and $w_2$ such that $B_1=w_1A^{\omega}$ and $B_2=w_2A^{\omega}$. If $\sigma\in H$ then 
	\[\gamma_\sigma:w_1A^{\omega}\rightarrow w_2A^{\omega}\hspace{5mm}w_1x_1x_2...\mapsto w_2\sigma(x_1)\sigma(x_2)...\]
	defines a similarity $B_1\rightarrow B_2$. Set $\mathrm{Sim}(B_1,B_2):=\{\gamma_\sigma\ |\ \sigma\in H\}$. 
	This defines a similarity structure $\mathrm{Sim}$ on $X$. The corresponding local similarity group is the 
	Nekrashevych-R\"over group $V_d(H)$ considered in \cite{nekrashevych} and \cite{röver}. In the case $H=1$, 
	this specializes to the Higman-Thompson groups $V_d$ and in particular to the well known Thompson group $V$ 
	for $d=1$.
\end{example}

If $A$ and $B$ are balls in $X$, we say that $A$ and $B$ are \emph{$\mathrm{Sim}$-equivalent} iff there exists a 
similarity $A\rightarrow B$ in $\mathrm{Sim}$. Denote by $[A]$ the corresponding equivalence class of $A$. More 
generally, if $Y$ and $Z$ are non-empty closed open subspaces of $X$, we say that $Y$ and $Z$ are 
\emph{locally $\mathrm{Sim}$-equivalent} iff there exists a local similarity $g:Y\rightarrow Z$ locally 
determined by $\mathrm{Sim}$. This means that for each $y\in Y$ there is a ball $B$ of $X$ with $y\in B$ and 
$B\subset Y$ such that $g(B)$ is a ball of $X$ with $g(B)\subset Z$ and $g|_B\in\mathrm{Sim}(B,g(B))$. Denote by
$\langle Y\rangle$ the corresponding equivalence class of $Y$. Of course, two balls are locally 
$\mathrm{Sim}$-equivalent if they are $\mathrm{Sim}$-equivalent.

If $Y\subset X$ is a non-empty closed open subspace, then one can restrict the similarity structure 
$\mathrm{Sim}$ to one on $Y$ by defining
\[\mathrm{Sim}|_Y:=\{\gamma\in\mathrm{Sim}\ |\ \mathrm{dom}(\gamma)\cup\mathrm{codom}(\gamma)
	\subset Y\}\cup\{\mathrm{id}_B\ |\ B\text{ a ball of }Y\}\]
where $\mathrm{dom}(\gamma)$ is the domain of $\gamma$ and $\mathrm{codom}(\gamma)$ is the codomain of $\gamma$. 
Note that a ball of $Y$ need not be a ball of $X$, so we have to add the identity maps in the definition of 
$\mathrm{Sim}|_Y$. The group $\Gamma(\mathrm{Sim}|_Y)$ is a subgroup of $\Gamma(\mathrm{Sim})$. More precisely, 
$\Gamma(\mathrm{Sim}|_Y)$ is isomorphic to the subgroup of $\Gamma(\mathrm{Sim})$ consisting of the elements 
$\alpha:X\rightarrow X$ with $\alpha(x)=x$ for $x\in X\setminus Y$. The proof of the next lemma is easy and left 
to the reader.

\begin{lemma}\label{lem: restricted subgroups}
	Let $X$ be a compact ultrametric space and $\mathrm{Sim}$ a similarity structure on $X$. Let $Y,Z\subset X$ 
	be two non-empty closed open subspaces with $\langle Y\rangle=\langle Z\rangle$. Then the groups 
	$\Gamma(\mathrm{Sim}|_Y)$ and $\Gamma(\mathrm{Sim}|_Z)$ are isomorphic.
\end{lemma}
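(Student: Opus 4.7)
The plan is to build an explicit isomorphism by conjugation. The hypothesis $\langle Y\rangle = \langle Z\rangle$ gives a local similarity $g\colon Y \to Z$ locally determined by $\mathrm{Sim}$, and $g^{-1}\colon Z\to Y$ has the same property. Identifying $\Gamma(\mathrm{Sim}|_Y)$ with the subgroup of $\Gamma(\mathrm{Sim})$ fixing $X\setminus Y$ pointwise (and similarly for $Z$), I would define
\[
\phi\colon \Gamma(\mathrm{Sim}|_Y) \longrightarrow \Gamma(\mathrm{Sim}|_Z),\qquad
\phi(\alpha)(x) \defq \begin{cases} g\bigl(\alpha(g^{-1}(x))\bigr), & x\in Z,\\ x, & x\notin Z.\end{cases}
\]
Since $Z$ is clopen, $\phi(\alpha)$ is automatically a self-homeomorphism of $X$, and that $\phi$ is a group homomorphism is a routine conjugation computation. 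The inverse map will be given by conjugation with $g^{-1}$ in the analogous manner, so once $\phi$ is shown to take values in $\Gamma(\mathrm{Sim}|_Z)$, bijectivity is immediate.

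The substantive step is verifying that $\phi(\alpha)$ is locally determined by $\mathrm{Sim}$ at every $z\in Z$ (it is trivially so on the open set $X\setminus Z$). Fix $z\in Z$, set $y\defq g^{-1}(z)$ and $y'\defq \alpha(y)$. Using that each of $g^{-1}$, $\alpha$, $g$ is locally determined by $\mathrm{Sim}$ (respectively $\mathrm{Sim}|_Y$), I would produce three balls of $X$ on which the respective restrictions belong to $\mathrm{Sim}$: a ball $B_1\ni z$ contained in $Z$ with $g^{-1}|_{B_1}\in\mathrm{Sim}$, a ball $B_2\ni y$ contained in $Y$ with $\alpha|_{B_2}\in\mathrm{Sim}$, and a ball $B_3\ni y'$ contained in $Y$ with $g|_{B_3}\in\mathrm{Sim}$. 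By continuity of $g^{-1}$ and $\alpha$ (and the ultrametric structure), $B_1$ can be shrunk to a ball $B$ of $X$ with $g^{-1}(B)\subset B_2$ and $\alpha(g^{-1}(B))\subset B_3$. The Restrictions axiom then places each of the three factor maps in $\mathrm{Sim}$, and the Compositions axiom places $\phi(\alpha)|_B = g\circ\alpha\circ g^{-1}|_B$ in $\mathrm{Sim}$, as required.

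The one technicality, which is the main obstacle, is that for $\alpha\in\Gamma(\mathrm{Sim}|_Y)$ local determinacy is phrased in terms of balls of $Y$, and such a ball need not be a ball of $X$; moreover $\mathrm{Sim}|_Y$ contains extraneous identity maps on $Y$-balls. This is handled by observing that any $Y$-ball around $y$ contains an $X$-ball around $y$ (since $Y$ is open), and after passing to this smaller ball either $\alpha$ restricts to an honest element of $\mathrm{Sim}$ via Restrictions, or else it is the identity, which lies in $\mathrm{Sim}$ by the Identities axiom. Thus the ball $B_2$ above can always be chosen to be a ball of $X$, and the conjugation argument goes through.
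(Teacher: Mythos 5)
The paper omits the proof entirely ("easy and left to the reader"), so there is nothing to compare against; your conjugation argument is exactly the natural proof the authors had in mind, and it is correct. You correctly identify and resolve the only real subtlety, namely that $\mathrm{Sim}|_Y$ involves $Y$-balls and extra identity maps, by shrinking to $X$-balls inside $Y$ and invoking the Restrictions and Identities axioms; the ball-shrinking via nestedness of ultrametric balls and the final Compositions step are routine.
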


Let $X$ be a compact ultrametric space. There is a rooted locally finite simplicial tree associated to $X$, 
called the ball hierarchy. It has balls of $X$ as vertices and an edge between the balls $A$ and $B$ whenever 
$A$ is a proper maximal subball of $B$ or vice versa. Take the ball $X$ as root. It is locally finite because 
$X$ is compact. Now define the \emph{depth} of a ball $B$ in $X$, denoted by $\mathrm{depth}(B)$, to be the 
distance between the vertex $B$ and the root $X$ in the ball hierarchy tree. We will need the following lemma 
in Section~\ref{sec: dual contraction}.

\begin{lemma}\label{lem: depth and partition}
	Let $X$ be a compact ultrametric space and $\mathcal{P}$ a partition of $X$ into non-empty closed open 
	subspaces, i.e.~$\mathcal{P}$ is a finite set of pairwise disjoint non-empty closed open subspaces of $X$ 
	so that the union of the elements of $\mathcal{P}$ is all of $X$. Then there exists $N\in\mathbb{N}$ such 
	that every ball with depth at least $N$ is contained in some $P\in\mathcal{P}$.
\end{lemma}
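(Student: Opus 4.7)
The plan is to reduce the statement about an arbitrary clopen partition to one about a partition into balls, and then use the depth function's monotonicity along the ball hierarchy.

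First I would refine $\mathcal{P}$ into a partition by balls. Each $P\in\mathcal{P}$ is compact (as a closed subset of the compact space $X$) and clopen, so, as remarked earlier in the paper, it is a finite union of balls. Taking such a decomposition for every $P$ and gathering the results produces a finite family $\mathcal{B}$ of balls that partitions $X$ and refines $\mathcal{P}$, in the sense that every $B\in\mathcal{B}$ is contained in some $P\in\mathcal{P}$. Set
\[
	N \defq 1 + \max_{B\in\mathcal{B}}\mathrm{depth}(B),
\]
which is a well-defined natural number because $\mathcal{B}$ is finite.

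Next I would verify the conclusion for this choice of $N$. Let $C$ be any ball of $X$ with $\mathrm{depth}(C)\ge N$. Since $\mathcal{B}$ covers $X$ and $C$ is non-empty, there exists $B\in\mathcal{B}$ with $B\cap C\ne\emptyset$. In a compact ultrametric space any two balls are either disjoint or nested, so either $C\subseteq B$ or $B\subsetneq C$. In the ball hierarchy tree, strict ball containment raises the vertex strictly towards the root and hence strictly decreases the depth; thus $B\subsetneq C$ would force $\mathrm{depth}(C)<\mathrm{depth}(B)\le N-1$, contradicting $\mathrm{depth}(C)\ge N$. Consequently $C\subseteq B$, and by construction $B\subseteq P$ for some $P\in\mathcal{P}$, as required.

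There is no serious obstacle: the only inputs are (i) the fact that a clopen subspace of a compact ultrametric space is a finite union of balls, and (ii) the trichotomy of disjoint/nested balls together with the observation that depth strictly decreases when passing from a ball to a proper superball. The argument is really just a finiteness argument on the ball hierarchy tree, and the explicit bound $N = 1 + \max_{B\in\mathcal{B}}\mathrm{depth}(B)$ makes this quantitative.
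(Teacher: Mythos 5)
Your proof is correct and follows essentially the same route as the paper: refine $\mathcal{P}$ to a partition by balls, take $N$ to be (roughly) the maximal depth of the refining balls, and use the disjoint-or-nested trichotomy together with the fact that depth strictly decreases when passing to a proper superball. The only cosmetic differences are that you argue directly rather than by contradiction and add a superfluous $+1$ to $N$.
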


\begin{proof}
	Since every non-empty closed open subspace in a compact ultrametric space is a finite union of balls, we can 
	assume without loss of generality that each $P\in\mathcal{P}$ is a ball. Set
	\[N:=\max\{\mathrm{depth}(P)\ |\ P\in\mathcal{P}\}.\]
	We claim that every ball with depth at least $N$ is contained in some $P\in\mathcal{P}$. Assume the 
	contradiction. Then there exists a ball $B$ such that $\mathrm{depth}(B)\geq\mathrm{depth}(P)$ for all 
	$P\in\mathcal{P}$ but $B\not\subset P$ for all $P\in\mathcal{P}$. The latter means that for each 
	$P\in\mathcal{P}$ either $B\cap P=\emptyset$ or $P\subsetneq B$. But $P\in\mathcal{P}$ cannot be a proper 
	subball of $B$ because of the depth condition. So we have $B\cap P=\emptyset$ for all $P\in\mathcal{P}$ 
	which contradicts $X=\bigcup_{P\in\mathcal{P}}P$.
\end{proof}

\begin{definition}
	We call $\gamma:A\rightarrow B$ in a similarity structure $\mathrm{Sim}$
	\begin{itemize}
		\item \emph{contracting} iff $A\subsetneq B$ or $B\subsetneq A$.
		\item \emph{separating} iff $A\cap B=\emptyset$.
		\item \emph{equalizing} iff $A=B$.
	\end{itemize}
\end{definition}

In general, the precise relationship between the similarity structure and the corresponding local similarity 
group is not yet understood very well. The following two propositions are easy results in this direction.

\begin{proposition}\label{prop: finiteness}
	Let $X$ be a compact ultrametric space and $\mathrm{Sim}$ a similarity structure on $X$. Then the following 
	are equivalent.
	\begin{itemize}
		\item[i)] $\Gamma(\mathrm{Sim})$ is finite.
		\item[ii)] There are only finitely many separating elements in $\mathrm{Sim}$.
		\item[iii)] There are only finitely many non-identity elements in $\mathrm{Sim}$.
	\end{itemize}
	In this case, $\mathrm{Sim}$ contains no contracting elements and $\Gamma(\mathrm{Sim})$ fixes all points of 
	$X$ except a finite subset of isolated points. It permutes these isolated points in a way such that 
	$\Gamma(\mathrm{Sim})\cong\Sigma_{d_1}\times...\times\Sigma_{d_n}$ is a finite product of finite symmetric 
	groups. 
\end{proposition}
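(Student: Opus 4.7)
The plan is to prove the equivalence via the cycle i) $\Rightarrow$ ii) $\Rightarrow$ iii) $\Rightarrow$ i), all driven by a dictionary between non-identity elements of $\mathrm{Sim}$ and elements of $\Gamma(\mathrm{Sim})$ they produce. To a separating $\alpha\colon D \to E$ I associate the involution $t_\alpha \in \Gamma(\mathrm{Sim})$ equal to $\alpha$ on $D$, to $\alpha^{-1}$ on $E$, and to the identity elsewhere; to an equalizing $\gamma\colon B \to B$ its extension by the identity off $B$. Both lie in $\Gamma(\mathrm{Sim})$ because any point outside the support is contained in a small ball disjoint from the support, where the identity axiom applies; moreover, $t_\alpha$ recovers $\alpha$ from its non-fixed set and the induced restriction, so the map $\alpha \mapsto t_\alpha$ is injective on non-identity elements.

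The easy directions follow: iii) $\Rightarrow$ ii) is immediate, and for i) $\Rightarrow$ ii), infinitely many separating elements would yield infinitely many distinct $t_\alpha \in \Gamma$. To rule out contracting elements under ii): given $\gamma\colon A \to B$ with $B \subsetneq A$, pick any ball $C$ inside the non-empty clopen set $A \setminus B$. A short induction gives $\gamma^n(C) \subseteq \gamma^{n-1}(B) \setminus \gamma^n(B)$, so consecutive $\gamma^n(C)$ and $\gamma^{n+1}(C)$ lie in disjoint regions; the restrictions $\gamma|_{\gamma^n(C)}$ (in $\mathrm{Sim}$ by the restriction axiom) are then infinitely many distinct separating elements, contradicting ii).

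For ii) $\Rightarrow$ iii), with no contracting element every non-identity member of $\mathrm{Sim}$ is either separating or equalizing. The pivotal observation is that under ii) the domain of any separating $\alpha\colon D \to E$ has only finitely many subballs, since each proper subball $D' \subsetneq D$ yields a distinct separating $\alpha|_{D'}$; hence $D$ is a finite set whose points are isolated in $X$. In particular, any isolated $y \in X$ moved by some $\alpha \in \mathrm{Sim}$ gives a separating singleton $\{y\} \to \{\alpha(y)\}$, so the set $I^*$ of isolated points ever moved is finite. Now any ball $B$ with $|\mathrm{Sim}(B,B)| > 1$ carries a nontrivial isometry $\gamma$ (the similarity ratio is forced to $1$); since $\bigcap_{r>0} B(x,r) = \{x\}$ would force $\gamma = \mathrm{id}$ if $\gamma$ fixed every proper subball setwise, $\gamma$ moves some point $x \in B$, and for $r < d(x,\gamma(x))$ the restriction $\gamma|_{B(x,r)}$ is a separating element with finite domain, so $x \in I^*$. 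Since each $y \in I^*$ is a leaf of the ball hierarchy at finite depth and therefore lies in only finitely many ancestor balls, the family $\mathcal{B}$ of balls with nontrivial self-similarity is finite, and finitely many equalizing elements result. Finally, iii) $\Rightarrow$ i) follows because by compactness every $g \in \Gamma(\mathrm{Sim})$ is determined by a finite partition of $X$ into balls with a $\mathrm{Sim}$-element on each piece, and with no contracting and only finitely many non-identity elements in $\mathrm{Sim}$, only finitely many configurations arise.

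For the final structural claims, no contracting is already in place, and the domain-finiteness argument also shows every $g \in \Gamma$ has support inside $I^*$, so $\Gamma$ fixes all but finitely many isolated points. The moved singletons split into $\mathrm{Sim}$-equivalence classes $C_1 \sqcup \cdots \sqcup C_n$ of sizes $d_i$; the involutions $t_\alpha$ realize every transposition within each $C_i$, giving the full $\Sigma_{d_i}$, while the actions on different classes commute because their supports are disjoint, yielding $\Gamma \cong \Sigma_{d_1} \times \cdots \times \Sigma_{d_n}$. The step I expect to be the main obstacle is the finiteness of $\mathcal{B}$: extracting an isolated moved point from every nontrivially-acting ball and verifying that each such point has only finitely many ancestor balls.
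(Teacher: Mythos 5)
Your proof is correct and follows the paper's line of argument closely: the same cycle of implications is driven by the same four ideas (a separating element $\alpha\colon D\to E$ extends to an involution $t_\alpha\in\Gamma$, so infinitely many separating elements give an infinite group; a contracting element manufactures infinitely many separating elements by iterating on a ball $C\subset A\setminus B$; under (ii) the domain of any separating element has only finitely many subballs and is therefore a finite set of isolated points; and any nontrivial self-similarity of a ball restricts to a separating element around a moved point). The one place you are more explicit than the paper is in finishing the count of non-identity equalizing elements: you bound the family $\mathcal B$ of balls carrying a nontrivial self-similarity by observing each such ball is an ancestor of some moved isolated point in the finite set $I^*$, each of which has finite depth and hence finitely many ancestors, whereas the paper's Observation~4 only records that each equalizing element is locally determined by identities and separating elements and leaves that last counting step implicit. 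This is a welcome bit of extra care rather than a genuinely different route, and the structural conclusion (realizing each $\Sigma_{d_i}$ by the transpositions $t_\alpha$ within a $\mathrm{Sim}$-class of moved singletons) matches the paper's reduction to the finite subspace of moved points.
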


\begin{proof}
	First we make a series of observations.
	
	\vspace{2mm}	
	{\it Observation 1}: If $\gamma:A\rightarrow B$ is a separating element in $\mathrm{Sim}$, then we can 
	construct an element $\alpha\in\Gamma(\mathrm{Sim})$ by defining 
	$\alpha|_A=\gamma$ and $\alpha|_B=\gamma^{-1}$ and $\alpha(x)=x$ for all other elements $x\in X$. If 
	$\gamma_i:A_i\rightarrow B_i$ with $i=1,2$ are two separating elements and $\gamma_1\neq\gamma_2$, then the 
	corresponding $\alpha_i$ also satisfy $\alpha_1\neq\alpha_2$. In particular, if there are infinitely many 
	distinct such $\gamma_i$, then $\Gamma(\mathrm{Sim})$ is infinite.
	
	\vspace{2mm}
	{\it Observation 2}: Assume there is a contracting element $\gamma:A\rightarrow B$ in $\mathrm{Sim}$. Assume 
	without loss of generality $B\subsetneq A$. Then there are infinitely many distinct separating elements in 
	$\mathrm{Sim}$ which can be constructed as follows. Let $C$ be a ball in $A\setminus B$ and define 
	$C_i=\gamma^i(C)$ for $i\in\{0,1,2,...\}$. Fix some $i$. Observe $\gamma^{i+k}(C)\subset\gamma^i(B)$ for all 
	$k\geq 1$ and $\gamma^i(C)\cap\gamma^i(B)=\emptyset$. It follows $C_i\cap C_{i+k}=\emptyset$ for all 
	$k\geq 1$. Therefore, we can define $\gamma_i:=\gamma|_{C_i}:C_i\rightarrow\gamma(C_i)$ for 
	$i\in\{0,1,2,...\}$ and obtain an infinite sequence of distinct separating elements in $\mathrm{Sim}$.
	
	\vspace{2mm}
	{\it Observation 3}: Assume there is a separating element $\gamma:A\rightarrow B$ in $\mathrm{Sim}$ with 
	$A$ being an infinite set. Then $A$ has infinitely many subballs and we see at once that there are 
	infinitely many distinct separating elements in $\mathrm{Sim}$.
	
	\vspace{2mm}
	{\it Observation 4}: Assuming we only have finitely many separating elements in $\mathrm{Sim}$, then we 
	claim that there are only finitely many non-identity equalizing elements in $\mathrm{Sim}$. This follows if 
	we show that each $\gamma:C\rightarrow C$ in $\mathrm{Sim}$ (which is an isometry) is itself locally 
	determined by identities and separating elements. This means that for each $x\in C$ we find a ball 
	$D\subset C$ with $x\in D$ and either $\gamma|_D=\mathrm{id}_D$ or $D\cap\gamma(D)=\emptyset$. We start by 
	noting that for any isometry $\alpha:Y\rightarrow Y$ of a compact ultrametric space $Y$, if 
	$\alpha\neq\mathrm{id}_Y$, then there must be a ball $D\subset Y$ such that $\alpha(D)\cap D=\emptyset$. 
	Now consider the maximal proper subballs of $C$. Either $\gamma$ is the identity on such a ball $B$ or 
	$\gamma$ maps $B$ to another such ball or $\gamma$ maps $B$ to itself and is not the identity. Only in the 
	last case we have to go a step deeper and consider the maximal proper subballs of $B$. Since 
	$\gamma|_B\neq\mathrm{id}_B$, we know that there must be a subball $E\subset B$ such that 
	$\gamma(E)\cap E=\emptyset$. Since there are only finitely many separating elements in $\mathrm{Sim}$, 
	we see that this process has to stop at some point. This proves the claim.
	
	\vspace{2mm}
	{\it i) $\Rightarrow$ ii)}: This is clear from the first observation.
	
	\vspace{2mm}
	{\it ii) $\Rightarrow$ iii)}: $\mathrm{Sim}$ cannot contain any contracting elements because of the second 
	observation. Because of the fourth observation, there are also only finitely many non-identity equalizing 
	elements. So $\mathrm{Sim}$ has only finitely many non-identity elements.
	
	\vspace{2mm}
	{\it iii) $\Rightarrow$ i)}: This is clear from the definition of $\Gamma(\mathrm{Sim})$.
	
	\vspace{2mm}
	Now we turn to the last statements. The first of these follows from the second observation. From the fourth 
	observation we know that each element in $\Gamma(\mathrm{Sim})$ is locally determined by identities and 
	separating elements in $\mathrm{Sim}$. We know that there are only finitely many elements of the latter 
	type in $\mathrm{Sim}$. From the third observation we deduce that these separating elements can only be 
	defined on finite subballs (which consist of finitely many isolated points). This proves that 
	$\Gamma(\mathrm{Sim})$ fixes all points of $X$ except possibly a finite subset $Y\subset X$ of isolated 
	points. Since $\Gamma(\mathrm{Sim}|_Y)\cong\Gamma(\mathrm{Sim})$, we can assume without loss of generality 
	that $X$ itself contains only finitely many points. In this case, by the restriction property of a 
	similarity structure, each element in $\Gamma(\mathrm{Sim})$ is locally determined by similarities in 
	$\mathrm{Sim}$ of the form $A\rightarrow B$ where $A$ and $B$ are singleton balls. The definition
	\enlargethispage{\baselineskip}
	\[x\sim y\hspace{5mm}:\Longleftrightarrow\hspace{5mm}\mathrm{Sim}\big(\{x\},\{y\}\big)\neq\emptyset\]
	gives an equivalence relation on $X$. Let $X_1,...,X_n$ be the corresponding equivalence classes. We have
	\[\Gamma(\mathrm{Sim})\cong\Gamma(\mathrm{Sim}|_{X_1})\times...\times\Gamma(\mathrm{Sim}|_{X_n})\]
	and $\Gamma(\mathrm{Sim}|_{X_i})\cong\Sigma_{d_i}$ where $d_i$ is the number of elements in $X_i$. 
	This proves the last claim of the proposition.
\end{proof}

\begin{proposition}\label{prop: loc fin}
	Let $X$ be a compact ultrametric space and $\mathrm{Sim}$ a similarity structure on $X$ such that 
	$\mathrm{Sim}(B_1,B_2)=\emptyset$ whenever $\mathrm{depth}(B_1)\neq\mathrm{depth}(B_2)$. Then the local 
	similarity group $\Gamma=\Gamma(\mathrm{Sim})$ is locally finite.
\end{proposition}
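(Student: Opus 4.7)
The plan is to filter $\Gamma=\Gamma(\mathrm{Sim})$ by a chain of finite subgroups $\Gamma_0\subseteq\Gamma_1\subseteq\cdots$ whose union is all of $\Gamma$; local finiteness then follows since every finitely generated subgroup sits inside some $\Gamma_D$.

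For each $D\in\mathbb{N}$ I would take $\mathcal{L}_D$ to be the partition of $X$ consisting of all balls of depth $D$ together with all singleton balls $\{x\}$ of depth strictly less than $D$ (i.e., isolated points $x$ whose minimal enclosing ball sits at depth $<D$ in the ball hierarchy). This is a \emph{finite} partition of $X$: finite because the ball hierarchy is locally finite, and a partition because every non-isolated point has enclosing balls of arbitrarily large depth while every isolated point eventually produces such a singleton. Define
\[\Gamma_D := \bigl\{\gamma\in\Gamma : \gamma|_B \in \mathrm{Sim}(B,\gamma(B))\text{ for every }B\in\mathcal{L}_D\bigr\}.\]

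The depth-preservation hypothesis ensures that each $\gamma\in\Gamma_D$ permutes $\mathcal{L}_D$ (similarities preserve depth, and the homeomorphism $\gamma$ preserves the leaves of the ball hierarchy), so the Compositions and Inverses axioms show that $\Gamma_D$ is a subgroup. It is finite, because an element of $\Gamma_D$ is determined by its restrictions to the members of $\mathcal{L}_D$, each of which lies in the finite set $\bigsqcup_{B'\in\mathcal{L}_D}\mathrm{Sim}(B,B')$. The inclusion $\Gamma_D\subseteq\Gamma_{D+1}$ follows from the Restrictions axiom, since each member of $\mathcal{L}_{D+1}$ is either already in $\mathcal{L}_D$ (a leaf singleton) or a subball of a depth-$D$ ball in $\mathcal{L}_D$. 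Finally, for $\gamma\in\Gamma$ with local-similarity partition $\{B_1,\ldots,B_k\}$, taking $D$ larger than the maximum depth of the $B_i$ forces every $B\in\mathcal{L}_D$ to sit inside some $B_i$ by the standard ultrametric nesting argument, whence Restrictions yields $\gamma\in\Gamma_D$.

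The main subtlety I anticipate is the handling of isolated points of $X$: restricting attention to balls of depth exactly $D$ alone would fail to cover $X$ whenever $X$ has isolated points whose minimal enclosing ball has depth $<D$, and this would also break the counting that makes $\Gamma_D$ finite, so $\mathcal{L}_D$ must be defined to include the leaf singletons. Once this partition is set up correctly, every remaining step is a direct application of one of the four axioms of a similarity structure combined with the depth-preservation assumption.
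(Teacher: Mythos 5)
Your proof is correct and rests on the same key mechanism as the paper's: the depth-preservation hypothesis forces compositions of elements to remain "shallow." The paper organizes this differently — it attaches to each $\alpha\in\Gamma$ the depth of its partition into maximum regions and proves the max-subadditivity $\mathrm{depth}(\beta\circ\alpha)\leq\max\{\mathrm{depth}(\alpha),\mathrm{depth}(\beta)\}$, then observes that only finitely many local similarities have depth $\leq N$; this sidesteps the need to build a single global partition and hence avoids the isolated-point bookkeeping that your construction of $\mathcal{L}_D$ has to handle (your care there is warranted and correct: without the leaf singletons, $\mathcal{L}_D$ would fail to cover $X$). Your version makes the filtration $\Gamma_0\subseteq\Gamma_1\subseteq\cdots$ explicit, which is arguably more transparent, but the two proofs are the same argument in different packaging: $\Gamma_D$ is exactly the set $\{\gamma:\mathrm{depth}(\gamma)\leq D\}$ in the paper's notation, and the paper's subadditivity inequality is precisely the statement that this set is closed under composition.
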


\begin{proof}
	First let $\alpha$ be an arbitrary element in $\Gamma$. For each $x\in X$ let $A_x$ be the maximal ball 
	with $x\in A_x$ such that there is an element $\alpha_x\in\mathrm{Sim}(A_x,\alpha(A_x))$ and 
	$\alpha|_{A_x}=\alpha_x$. The set of balls $\{A_x\ |\ x\in X\}$ is a partition of $X$ called the partition 
	into maximum regions for $\alpha$. Define
	\[\mathrm{depth}(\alpha):=\max\{\mathrm{depth}(A_x)\ |\ x\in X\}\]
	Note that from the assumption on $\mathrm{Sim}$, each similarity in $\mathrm{Sim}$ preserves the depth of 
	balls. Let $\alpha,\beta\in\Gamma$ and let $\mathcal{P}$ and $\mathcal{Q}$ be the corresponding partitions 
	into maximum regions. Observe the composition $\beta\circ\alpha$. It is locally determined by $\mathrm{Sim}$ 
	on a partition $\mathcal{R}$ of $X$ into balls such that for every $R\in\mathcal{R}$ either $R=P$ for some 
	$P\in\mathcal{P}$ or $R=(\alpha|_P)^{-1}(Q)$ for some $P\in\mathcal{P}$ and some $Q\in\mathcal{Q}$ with 
	$Q\subset\alpha(P)$. It follows
	\[\mathrm{depth}(\beta\circ\alpha)\leq\max\{\mathrm{depth}(R)\ |\ R\in\mathcal{R}\}\leq\max\{\mathrm{depth}(\alpha),\mathrm{depth}(\beta)\}.\]
	So if $\alpha_1,...,\alpha_k\in\Gamma$, we also have
	\begin{equation}\label{eq: proof loc fin}
		\mathrm{depth}(\alpha_1\circ...\circ\alpha_k)\leq\max\{\mathrm{depth}(\alpha_1),...,
		\mathrm{depth}(\alpha_k)\}.
	\end{equation}
	Now let $\Lambda$ be a subgroup of $\Gamma$ with finite generating set $\gamma_1,...,\gamma_n$. From 
	\eqref{eq: proof loc fin} we deduce that 
	\[\mathrm{depth}(\lambda)\leq\max\{\mathrm{depth}(\gamma_i)\ |\ i=1,...,n\}=:N\]
	for each $\lambda\in\Lambda$. We claim that there are only finitely many local similarities $\gamma$ 
	locally determined by $\mathrm{Sim}$ such that $\mathrm{depth}(\gamma)\leq N$. This follows because there 
	are only finitely many balls $B$ with $\mathrm{depth}(B)\leq N$ and each $\mathrm{Sim}(B_1,B_2)$ is finite 
	by definition.
\end{proof}

\section{A condition on the similarity structure}\label{sec: dual contraction}

Here we introduce a condition on similarity structures used for the proof of Theorem \ref{thm: main thm} and 
Theorem \ref{thm: byproduct thm}.

\begin{definition}\label{def: dual contraction}
	Let $X$ be a compact ultrametric space and $\mathrm{Sim}$ a similarity structure on $X$. We say 
	$\mathrm{Sim}$ is {\it dually contracting} or has a {\it dual contraction} if there are two disjoint proper 
	subballs $B_1$ and $B_2$ of $X$ together with similarities $X\rightarrow B_1$ and $X\rightarrow B_2$ in 
	$\mathrm{Sim}$.
\end{definition}

\begin{remark}
	The property in Definition \ref{def: dual contraction} is rather a property of the similarity structure 
	than of the local similarity group $\Gamma(\mathrm{Sim})$. To illustrate the precise meaning of this 
	statement, consider the following. Let $X$ be a compact ultrametric space and $\mathrm{Sim}$ a similarity 
	structure on it. Remove all elements in $\mathrm{Sim}$ of the form $A\rightarrow B$ where either $A=X\neq B$ 
	or $A\neq X=B$. Denote the remaining set of similarities by $\mathrm{Sim}^-$. It is easy to see that 
	$\mathrm{Sim}^-$ still forms a similarity structure on $X$. Furthermore, since no similarity in 
	$\mathrm{Sim}\setminus\mathrm{Sim}^-$ can be used to form a local similarity on $X$, the groups 
	$\Gamma(\mathrm{Sim})$ and $\Gamma(\mathrm{Sim}^-)$ are the same as sets of local similarities on $X$. 
	However, even if $\mathrm{Sim}$ is dually contracting, the similarity structure $\mathrm{Sim}^-$ never is. 
	But it can be extended to a dually contracting one. We therefore call a similarity structure 
	{\it potentially} dually contracting if it can be extended in such a way that the corresponding local 
	similarity groups are the same (as sets of local similarities) and the extension is dually contracting.
\end{remark}

\begin{example}
	The similarity structures presented in Example \ref{ex: thompson et al} are dually contracting. So Theorem 
	\ref{thm: main thm} applies to the Nekrashevych-R\"over groups $V_d(H)$ and in particular to the Thompson 
	group $V$.
\end{example}

\begin{example}
	If $X$ is a compact ultrametric space and $\mathrm{Sim}$ a similarity structure on $X$ such that the local 
	similarity group $\Gamma(\mathrm{Sim})$ is finite, then, by Proposition \ref{prop: finiteness}, 
	$\mathrm{Sim}$ cannot be potentially dually contracting.
\end{example}

\begin{example}
	Let $X$ be the end space of the rooted binary tree with the usual order. Let $B_1$ and $B_2$ be the two 
	maximal proper subballs of $X$. Let $\mathrm{Sim}$ be the similarity structure generated by the unique order 
	preserving similarity $\gamma:B_1\rightarrow B_2$, i.e.~the smallest similarity structure on $X$ containing 
	$\gamma$. More precisely, the non-trivial similarities in $\mathrm{Sim}$ are the unique order preserving 
	similarites $xA^\omega\rightarrow\bar{x}A^\omega$ where $\bar{x}$ is obtained from $x$ by changing the first 
	letter of $x$, e.g.~$\bar{x}=101$ if $x=001$. It follows from Proposition \ref{prop: finiteness} that 
	$\Gamma=\Gamma(\mathrm{Sim})$ is infinite and from Proposition \ref{prop: loc fin} that $\Gamma$ is locally 
	finite. It is therefore not finitely generated. Since it is locally finite, it is also elementary amenable 
	and consequently $H_0(\Gamma,\mathcal{N}(\Gamma))\neq 0$. This shows that we cannot drop the condition 
	dually contracting in Theorem \ref{thm: main thm}.\\
	However, this similarity structure is not potentially dually contracting. Otherwise there would be a 
	similarity $\delta:X\rightarrow A$ with $A$ a proper subball of $X$. Let $C$ be another proper subball of 
	$X$ with $A\cap C=\emptyset$. We have $C\cap\delta(C)=\emptyset$. The restriction 
	$\hat{\delta}=\delta|_C:C\rightarrow\delta(C)=:D$ fits into a local similarity on $X$. Just define 
	$\alpha:X\rightarrow X$ by
	\[\alpha|_C:=\hat{\delta}\hspace{5mm}\alpha|_D:=\hat{\delta}^{-1}\hspace{5mm}\alpha|_{X\setminus(C\cup D)}:=
		\mathrm{id}.\]
	This local similarity $\alpha$ is not in $\Gamma(\mathrm{Sim})$ because neither $\hat{\delta}$ nor any of 
	its restrictions is an element of $\mathrm{Sim}$.
\end{example}

\begin{remark}
	In \cite{farley-hughes}, Farley and Hughes introduced a condition on a similarity structure $\mathrm{Sim}$, 
	called {\it rich in ball contractions}, which is defined as follows. There exists a constant $c>0$ such that 
	for every $k\geq c$ and $(B_1,...,B_k)$ a $k$-tuple of balls, there is a ball $B$ with at least two maximal 
	proper subballs and an injection
	\[\sigma:\{A\ |\ A\text{ maximal proper subball of }B\}\rightarrow\{(B_i,i)\ |\ 1\leq i\leq k\}\]
	with $[A]=[B_i]$ whenever $\sigma(A)=(B_i,i)$. In \emph{loc.~cit.} it is shown that local similarity groups 
	arising from similarity structures having this property and with only finitely many $\mathrm{Sim}$-equivalence 
	classes of balls are of type $F_\infty$. It is quite clear that rich in ball contractions implies dually 
	contracting, just take $(X,...,X)$ as a $k$-tuple of balls. 
\end{remark}

In the next lemma, we extract the key features of the property dually contracting. Apart from Proposition 
\ref{prop: non-amenability}, these are the only ones we will need in the proof of the main theorem. So we could 
have stated them as a definition, but Definition \ref{def: dual contraction} is much easier to state and to 
verify.

\begin{lemma}\label{lem: key features}
	Let $X$ be a compact ultrametric space with dually contracting similarity structure $\mathrm{Sim}$. 
	Then there exists a sequence $(S_i)_{i\in\mathbb{N}}$ where each $S_i$ is a set $\{B_i^1,...,B_i^{n_i}\}$ 
	of pairwise disjoint balls in $X$ satisfying the following properties.
	\begin{itemize}
		\item[i)] For each $i,k$ there exists a similarity $X\rightarrow B_i^k$ in $\mathrm{Sim}$.
		\item[ii)] $|S_i|\xrightarrow{i\rightarrow\infty}{}\infty$.
		\item[iii)] For every $i_0\in\mathbb{N}$ and every partition $\mathcal{P}$ of $X$ into non-empty closed 
		open subspaces there is an $i\geq i_0$ such that for every $B\in S_i$ there exists $P\in\mathcal{P}$ with 
		$B\subset P$.
	\end{itemize}
\end{lemma}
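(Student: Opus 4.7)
The plan is to construct $S_n$ for every $n$ by iterating the two contracting similarities supplied by the dually contracting hypothesis. Write $\phi_j\in\mathrm{Sim}(X,B_j)$ for $j\in\{1,2\}$, where $B_1,B_2$ are disjoint proper subballs of $X$. For a finite word $w=w_1w_2\cdots w_n\in\{1,2\}^n$ I define recursively
\[
B_{\emptyset}\defq X,\qquad B_{w}\defq \phi_{w_1}\bigl(B_{w_2\cdots w_n}\bigr),
\]
and I plan to take $S_n\defq\{B_w : w\in\{1,2\}^n\}$, so that $|S_n|=2^n$ and (ii) is immediate.

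The first thing I would verify is (i): there is an element $\sigma_w\in\mathrm{Sim}(X,B_w)$. I argue by induction on $n=|w|$. Supposing $\sigma_{w_2\cdots w_n}\in\mathrm{Sim}(X,B_{w_2\cdots w_n})$, the restriction axiom gives $\phi_{w_1}|_{B_{w_2\cdots w_n}}\in\mathrm{Sim}(B_{w_2\cdots w_n},B_w)$, and then the composition axiom yields $\sigma_w\defq \phi_{w_1}|_{B_{w_2\cdots w_n}}\circ\sigma_{w_2\cdots w_n}\in\mathrm{Sim}(X,B_w)$. Along the way I also need pairwise disjointness of the balls in $S_n$. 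This is again an induction on $n$: for $w\ne w'$ in $\{1,2\}^n$, either $w_1\ne w'_1$ (in which case $B_w\subset B_{w_1}$, $B_{w'}\subset B_{w'_1}$, and $B_1\cap B_2=\emptyset$), or $w_1=w'_1$ (in which case $B_w=\phi_{w_1}(B_{w_2\cdots w_n})$ and $B_{w'}=\phi_{w_1}(B_{w'_2\cdots w'_n})$ are disjoint since $\phi_{w_1}$ is a bijection and the inductive hypothesis applies to the preimages).

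The remaining point, and the one that requires more care, is (iii). The key observation is that any similarity $\phi\colon X\to B$ between balls sends maximal proper subballs of its domain bijectively onto maximal proper subballs of its codomain (this is because $\phi$ is a scaling bijection on an ultrametric, hence preserves the lattice of subballs). Consequently, if $A$ is a ball of depth $k$ in $X$, then $\phi(A)$, viewed as a subball of $B$, has depth $k$ in $B$, so its depth in $X$ is $\mathrm{depth}(B)+k$. Applying this to the recursive definition of $B_w$ and using $\mathrm{depth}(B_1),\mathrm{depth}(B_2)\ge 1$ (both being proper subballs of $X$), an induction on $|w|$ yields $\mathrm{depth}(B_w)\ge |w|$. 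Thus every ball in $S_n$ has depth at least~$n$.

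To finish, given a partition $\mathcal{P}$ of $X$ into non-empty closed open subspaces and $i_0\in\mathbb{N}$, Lemma~\ref{lem: depth and partition} furnishes $N\in\mathbb{N}$ such that every ball of depth at least~$N$ is contained in some $P\in\mathcal{P}$. Setting $i\defq\max(i_0,N)$, every $B\in S_i$ has depth at least $i\ge N$ and is therefore contained in some $P\in\mathcal{P}$, which is exactly (iii). The main obstacle I expect is the depth computation: keeping track of depth under the recursive application of $\phi_{w_1}$, but once the general fact about similarities and maximal proper subballs is established, the induction is routine.
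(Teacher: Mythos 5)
Your proposal is correct and is essentially the same as the paper's: both iterate the two contracting similarities of the dual contraction to produce the binary tree of balls $S_n$ of size $2^n$, verify (i) by the restriction and composition axioms, and prove (iii) by showing $\mathrm{depth}(S_n)\to\infty$ and invoking Lemma~\ref{lem: depth and partition}. The only cosmetic difference is that you index by binary words and spell out the depth-additivity observation (similarities preserve the subball lattice, so $\mathrm{depth}(\phi(A))=\mathrm{depth}(B)+\mathrm{depth}(A)$) more explicitly than the paper's one-line remark.
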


\begin{proof}
	Let $B_1^1$ and $B_1^2$ be two disjoint proper subballs of $X$ and $\gamma_i:X\rightarrow B^i_1$ similarities 
	in $\mathrm{Sim}$ for $i=1,2$. We will define the $S_i$'s inductively. First set $S_1=\{B_1^1,B_1^2\}$. Now 
	assume $S_i=\{B_i^1,...,B_i^{n_i}\}$ has been constructed. Then define
	\[S_{i+1}:=\{\gamma_1(B_i^k),\gamma_2(B_i^k)\ |\ 1\leq k\leq n_i\}.\]
	It is clear that $|S_i|=2^i$ so that ii) holds. Using that $\mathrm{Sim}$ is closed under restriction and 
	composition, it is easy to show property i). It is also quite clear that the balls in each $S_i$ are pairwise 
	disjoint. For iii), first define
	\[\mathrm{depth}(S)=\min\{\mathrm{depth}(B)\ |\ B\in S\}\]
	for any finite set $S$ of balls in $X$. Then the claim follows from Lemma \ref{lem: depth and partition} if 
	we show
	\[\lim_{i\rightarrow\infty}\mathrm{depth}(S_i)=\infty.\]
	Note that an application of the contractions $\gamma_1$ or $\gamma_2$ to a ball increases its depth by at 
	least one. It follows that $\mathrm{depth}(S_i)$ increases by at least one if $i$ increases by one and 
	therefore goes to infinity if $i$ tends to infinity.
\end{proof}

\begin{proposition}\label{prop: non-amenability}
	If $\mathrm{Sim}$ is a dually contracting similarity structure, then the local similarity group 
	$\Gamma=\Gamma(\mathrm{Sim})$ contains a non-abelian free subgroup and is therefore non-amenable. 
\end{proposition}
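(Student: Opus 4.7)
The plan is to construct two hyperbolic elements $a,b\in\Gamma$ with pairwise disjoint attracting/repelling fixed points and apply the classical ping-pong lemma to produce a non-abelian free subgroup.

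Let $\gamma_i\in\mathrm{Sim}(X,B_i)$ ($i=1,2$) be the similarities provided by the dual contraction, with $B_1\cap B_2=\emptyset$. Since each $\gamma_i$ strictly increases the depth of balls, the nested sequence $X\supsetneq B_i\supsetneq\gamma_i(B_i)\supsetneq\cdots$ has a unique common point $\bar i\in B_i$, fixed by $\gamma_i$; clearly $\bar 1\ne\bar 2$.

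The heart of the argument is the construction of an element $a\in\Gamma$ with $\bar 1$ as attracting and $\bar 2$ as repelling fixed point. Setting $B_{ij}:=\gamma_i(B_j)$ and assuming first the simple case $X=B_1\cup B_2$ (so $B_i=B_{i1}\sqcup B_{i2}$ automatically), I would define $a$ piecewise on the partition $X=B_1\sqcup B_{21}\sqcup B_{22}$ by
\[a|_{B_1}=\gamma_1|_{B_1},\qquad a|_{B_{22}}=\gamma_2^{-1}|_{B_{22}},\qquad a|_{B_{21}}=\tau,\]
where $\tau\in\mathrm{Sim}(B_{21},B_{12})$ is obtained from the composition of Sim-similarities $B_{21}\xrightarrow{\gamma_2^{-1}}B_1\xrightarrow{\gamma_1^{-1}}X\xrightarrow{\gamma_2}B_2\xrightarrow{\gamma_1}B_{12}$ and lies in $\mathrm{Sim}$ by the composition, inverse and restriction axioms. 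A direct check shows that $a$ is a bijection of $X$ with every piece in $\mathrm{Sim}$, so $a\in\Gamma$. Tracking orbits through the three pieces yields $a^n(x)\to\bar 1$ for every $x\ne\bar 2$ and $a^{-n}(x)\to\bar 2$ for every $x\ne\bar 1$; in particular $a$ has infinite order. If $X\ne B_1\cup B_2$, extending $a$ by the identity on the closed-open remainder $X\setminus(B_1\cup B_2)$ preserves everything.

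For the second element, let $\sigma\in\Gamma$ be the involution swapping $B_1$ and $B_2$ given by $\sigma|_{B_1}=\gamma_2\circ\gamma_1^{-1}$ and $\sigma|_{B_2}=\gamma_1\circ\gamma_2^{-1}$ (both in $\mathrm{Sim}$), and set $b:=\sigma a\sigma\in\Gamma$. Then $b$ is hyperbolic with attracting fixed point $\gamma_2(\bar 1)\in B_2$ and repelling fixed point $\gamma_1(\bar 2)\in B_1$, and the four fixed points $\bar 1,\gamma_1(\bar 2),\bar 2,\gamma_2(\bar 1)$ are pairwise distinct. Pick pairwise disjoint ball neighbourhoods of these four points, possible since $X$ is totally disconnected; for all $n$ sufficiently large, the hyperbolic convergences give the standard ping-pong inequalities for $a^{\pm n}$ and $b^{\pm n}$, so $\langle a^n,b^n\rangle$ is a free subgroup of rank two. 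Non-amenability of $\Gamma$ then follows, since non-amenability is inherited by overgroups of nonabelian free groups.

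The main obstacle is verifying in detail that the piecewise definition of $a$ yields an element of $\Gamma$ with the claimed hyperbolic dynamics: this involves Sim-axiom bookkeeping on the three pieces, together with a small case analysis inside $B_{22}$ to track when iterates of $\gamma_2^{-1}$ exit $B_{22}$ and subsequently contract towards $\bar 1$. After that, the conjugation producing $b$ and the ping-pong conclusion are formal.
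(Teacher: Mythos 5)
Your approach is genuinely different from the paper's. The paper produces a copy of $\mathbb{Z}_3 * \mathbb{Z}_2$ inside $\Gamma$ out of two \emph{torsion} elements: an $a_1$ of order $3$ cyclically permuting three disjoint balls, and an $a_2$ of order $2$ swapping one of them with a fourth ball, with ping-pong sets that are literally two of the balls. No dynamical limits and no passage to high powers are needed; the ping-pong inclusions are checked once and for all. You instead build two infinite-order hyperbolic transformations and run Tits-style dynamical ping-pong on their high powers. Both strategies are viable, but the paper's is cleaner precisely because it sidesteps the asymptotic bookkeeping you flag at the end.

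There is, however, a concrete gap in your construction of $a$ when $X\neq B_1\cup B_2$. Write $R=X\setminus(B_1\cup B_2)$. Your domain pieces are $B_1$, $B_{21}=\gamma_2(B_1)$, $B_{22}=\gamma_2(B_2)$ and, after your extension, $R$. But $B_2=\gamma_2(X)=B_{21}\sqcup B_{22}\sqcup\gamma_2(R)$, so the set $\gamma_2(R)\subset B_2$ is covered by none of your pieces and $a$ is simply undefined there. Dually, the images of your pieces are $B_{11}$, $B_{12}$, $B_2$ and $R$, whose union misses $B_1\setminus(B_{11}\cup B_{12})=\gamma_1(R)$; declaring $a$ to be the identity on $\gamma_2(R)$ would moreover destroy injectivity (since already $a(B_{22})=B_2\supset\gamma_2(R)$) without restoring surjectivity. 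So ``extending by the identity on $R$'' does not preserve everything. The repair is to add the further piece
\[a|_{\gamma_2(R)}:=\gamma_1\circ\gamma_2^{-1}\big|_{\gamma_2(R)}\colon\ \gamma_2(R)\longrightarrow\gamma_1(R),\]
which is locally in $\mathrm{Sim}$ because $\gamma_1\circ\gamma_2^{-1}\in\mathrm{Sim}(B_2,B_1)$ and $\gamma_2(R)$ is a finite union of balls. With this piece added, $a$ is a bijection of $X$ lying in $\Gamma$, it fixes $R$ pointwise, and the north--south dynamics on $B_1\cup B_2$ go through as you describe (a point of $\gamma_2(R)$ is thrown into $B_1$ after one step). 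For the ping-pong one should then place both sets inside $B_1\cup B_2$, e.g.\ $Y_1=B_{11}\cup B_{22}$ and $Y_2=B_{12}\cup B_{21}$, and pass to $a^2$, $b^2$: the inclusions $a^{\pm1}(Y_2)\subset Y_1$ fail, but $a^{n}(Y_2)\subset Y_1$ holds for all $|n|\ge 2$, and symmetrically for $b$, so $\langle a^2,b^2\rangle$ is free of rank $2$.
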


\begin{proof}
	We will identify two elements in $\Gamma$, $a_1$ and $a_2$, with $\mathrm{ord}(a_1)=3$ and 
	$\mathrm{ord}(a_2)=2$. We will also construct disjoint subsets $X_1$ and $X_2$ of $X$ such that
	\begin{eqnarray*}
		a_1X_2 & \subset & X_1\\
		a_1^2X_2 & \subset & X_1\\
		a_2X_1 & \subset & X_2
	\end{eqnarray*}
	Thus the ping-pong lemma will tell us that the subgroups $H_1:=\langle a_1\rangle\cong\mathbb{Z}_3$ and 
	$H_2:=\langle a_2\rangle\cong\mathbb{Z}_2$ together generate a free product in $\Gamma$, 
	i.e.~$\langle H_1,H_2\rangle\cong H_1*H_2\cong\mathbb{Z}_3 * \mathbb{Z}_2$ is a subgroup of $\Gamma$, 
	which itself contains a non-abelian free subgroup. Let's turn to the construction 
	(see Figure~\ref{fig: dually contracting} below). Let $A_1$ and $A_2$ be two disjoint proper subballs of 
	$X$ and $\gamma_i:X\rightarrow A_i$ for $i=1,2$ two similarities in $\mathrm{Sim}$. Set
	\begin{eqnarray*}
		B_1 & := & \gamma_1(A_1)\\
		B_2 & := & \gamma_1(A_2)\\
		B_3 & := & \gamma_2(A_1)\\
		B_4 & := & \gamma_2(A_2)
	\end{eqnarray*}
	These are pairwise disjoint balls in $X$. The similarities $\gamma_1$ and $\gamma_2$ induce similarities 
	between any pair of the balls $A_i$ and $B_i$. For example
	\begin{eqnarray*}
		\delta_2 & := & 
		\gamma_2|_{A_1}\circ\gamma_1\circ\gamma_2^{-1}\circ\gamma_1^{-1}|_{B_2}:B_2\rightarrow B_3\\
		\delta_3 & := &
		\gamma_2|_{A_2}\circ\gamma_2\circ\gamma_1^{-1}\circ\gamma_2^{-1}|_{B_3}:B_3\rightarrow B_4\\
		\delta_4 & := &
		\gamma_1|_{A_2}\circ\gamma_2^{-1}|_{B_4}:B_4\rightarrow B_2
	\end{eqnarray*}
	Now define $a_1$ to be the identity except on the balls $B_2$, $B_3$ and $B_4$ where
	\begin{eqnarray*}
		a_1|_{B_2}&:=&\delta_2: B_2\rightarrow B_3\\
		a_1|_{B_3}&:=&\delta_3: B_3\rightarrow B_4\\
		a_1|_{B_4}&:=&\delta_4: B_4\rightarrow B_2
	\end{eqnarray*}
	It is straightforward to verify
	\begin{eqnarray*}
		\delta_4\circ\delta_3\circ\delta_2 & = & \mathrm{id}_{B_2}\\
		\delta_2\circ\delta_4\circ\delta_3 & = & \mathrm{id}_{B_3}\\
		\delta_3\circ\delta_2\circ\delta_4 & = & \mathrm{id}_{B_4}
	\end{eqnarray*}	
	so that $a_1$ has order $3$. Define $a_2$ to be the identity except on the balls $B_2$ and $A_2$ where
	\begin{eqnarray*}
		a_2|_{B_2}&:=&\gamma_1^{-1}|_{B_2}:B_2\rightarrow A_2\\
		a_2|_{A_2}&:=&\gamma_1|_{A_2}:A_2\rightarrow B_2
	\end{eqnarray*}
	It is trivial to check $a_2^2=\mathrm{id}_X$. Last but not least define $X_1:=A_2$ and $X_2:=B_2$. It 
	is easy to see from the definitions that the relations at the beginning of the proof hold, so it is completed.
\end{proof}

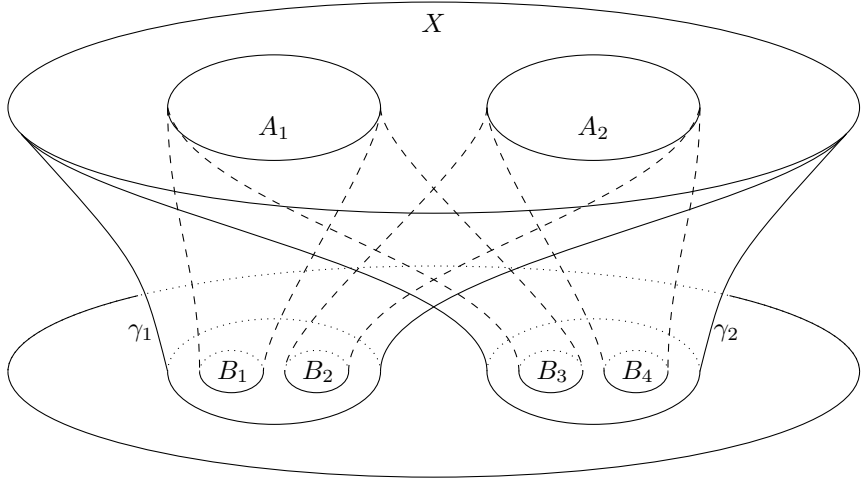
\begin{figure}[H]
\centering
\begin{tikzpicture}[scale=0.7]
	\draw[dotted] (8,-5) arc (0:180:8 and 2);
	\draw (-8,-5) arc (180:360:8 and 2);
	\draw (8,-5) arc (0:46:8 and 2);
	\draw (-8,-5) arc (180:134:8 and 2);
	
	\draw[dotted] (-1,-5) arc (0:180:2 and 1);
	\draw (-5,-5) arc (180:360:2 and 1);
	\draw[dotted] (5,-5) arc (0:180:2 and 1);
	\draw (1,-5) arc (180:360:2 and 1);
	
	\draw[dotted] (-3.2,-5) arc (0:180:0.6 and 0.4) node[right=3pt] {$B_1$};
	\draw (-4.4,-5) arc (180:360:0.6 and 0.4);
	\draw[dotted] (-1.6,-5) arc (0:180:0.6 and 0.4) node[right=3pt] {$B_2$};
	\draw (-2.8,-5) arc (180:360:0.6 and 0.4);
	\draw[dotted] (2.8,-5) arc (0:180:0.6 and 0.4) node[right=3pt] {$B_3$};
	\draw (1.6,-5) arc (180:360:0.6 and 0.4);
	\draw[dotted] (4.4,-5) arc (0:180:0.6 and 0.4) node[right=3pt] {$B_4$};
	\draw (3.2,-5) arc (180:360:0.6 and 0.4);
	
	\draw[dashed] (-5,0) .. controls (-5,-2) and (-4.4,-3) .. (-4.4,-5);
	\draw[dashed] (-1,0) .. controls (-1,-1) and (-3.2,-4) .. (-3.2,-5);	
	
	\draw[dashed] (-5,0) .. controls (-4.5,-2) and (1.6,-3) .. (1.6,-5);	
	\draw[dashed] (-1,0) .. controls (-1,-1) and (2.8,-4) .. (2.8,-5);
	
	\draw[dashed] (1,0) .. controls (1,-1) and (-2.8,-4) .. (-2.8,-5);	
	\draw[dashed] (5,0) .. controls (4.5,-2) and (-1.6,-3) .. (-1.6,-5);
	
	\draw[dashed] (1,0) .. controls (1,-1) and (3.2,-4) .. (3.2,-5);	
	\draw[dashed] (5,0) .. controls (5,-2) and (4.4,-3) .. (4.4,-5);
	
	\draw (0,0) ellipse (8 and 2) node[above=25pt] {$X$};
	\draw (-3,0) ellipse (2 and 1) node[below] {$A_1$};
	\draw (3,0) ellipse (2 and 1) node[below] {$A_2$};	
	
	\draw (-7.75,-0.5) .. controls (-5.5,-3) .. (-5,-5) node[left=10pt,above=8pt] {$\gamma_1$};
	\draw (7.75,-0.5) .. controls (5.5,-3) .. (5,-5) node[right=10pt,above=8pt] {$\gamma_2$};
	\draw (7.75,-0.5) .. controls (6.5,-2) and (-1,-3) .. (-1,-5);
	\draw (-7.75,-0.5) .. controls (-6.5,-2) and (1,-3) .. (1,-5);
\end{tikzpicture}
\vspace{2mm}
\caption{Figure for the proof of Proposition \ref{prop: non-amenability}. The ellipses below are copies of the 
ellipses above and represent the various balls appearing in the proof. The vertical wires represent the 
similarities $\gamma_i$ or restrictions of them.}
\label{fig: dually contracting}
\end{figure}

\section{Proof of the main theorem}\label{sec: proof}

The proof relies on a common feature of relatives of Thompson's groups: they contain products of arbitrarily 
many copies of themselves as subgroups. This feature was utilized in homological vanishing results 
before~\cites{monod, bader-furman-sauer}. 

\subsection{A spectral sequence}
Our main tool will be a spectral sequence explained in Brown's book~\cite{brown-book}*{Chapter~VII.7} which we 
will summarize now. Let $\Gamma$ be a group and $Z$ a simplicial complex with a simplicial $\Gamma$-action. 
Let $M$ be a $\mathbb{Z}[\Gamma]$-module. For $\sigma$ a simplex in $Z$, denote by $\Gamma_\sigma$ the isotropy 
group of $\sigma$, i.e.~all the elements in $\Gamma$ which fix $\sigma$ as a set of vertices. Let $M_\sigma$ be 
the orientation $\mathbb{Z}[\Gamma_\sigma]$-module, i.e.~$M_\sigma=M$ as an abelian group together with the 
action
\[\Gamma_\sigma\times M\rightarrow M\hspace{3mm}(g,m)\mapsto
	\begin{cases}
	gm&\text{if }g\text{ is an even permutation of the vertices of }\sigma\\
	-gm&\text{if }g\text{ is an odd permutation of the vertices of }\sigma
	\end{cases}\]
Furthermore, let $\Sigma_p$ be a set of $p$-cells representing the $\Gamma$-orbits of $Z$. Then there is a 
spectral sequence $E^k_{pq}$ with first term 
\[E^1_{pq}=\bigoplus_{\sigma\in\Sigma_p}H_q(\Gamma_\sigma,M_\sigma)\Rightarrow H^\Gamma_{p+q}(Z,M)\]
converging to the $\Gamma$-equivariant homology of $Z$ with coefficients in $M$. In our case, $Z$ will be acyclic, 
so that $H^\Gamma_{p+q}(Z,M)=H_{p+q}(\Gamma,M)$. Furthermore, $\Gamma_\sigma$ will fix $\sigma$ vertex-wise, so 
that $M_\sigma=M$ as $\mathbb{Z}[\Gamma_\sigma]$-modules. We therefore have a spectral sequence $E^k_{pq}$ with
\[E^1_{pq}=\bigoplus_{\sigma\in\Sigma_p}H_q(\Gamma_\sigma,M)\Rightarrow H_{p+q}(\Gamma,M).\]

\subsection{The poset of partitions into closed open sets}
Now let $\Gamma=\Gamma(\mathrm{Sim})$ be a local similarity group coming from a dually contracting similarity 
structure $\mathrm{Sim}$ on the compact ultrametric space $X$. Next we define, for each $n\in\mathbb{N}$, a 
simplicial $\Gamma$-complex $Z_n$ associated to a poset $(\mathbf{P}_n,\le)$. Unlike the simplicial complex 
in~\cite{farley-hughes}, used for proving finiteness properties, it has large isotropy groups. By definition, 
an element in $\mathbf{P}_n$ is a set (\emph{partition}) $\mathcal{P}=\{P_1,...,P_k\}$ of pairwise disjoint 
non-empty closed open subspaces of $X$ with $X=P_1\cup...\cup P_k$ satisfying the following extra condition: 
There are at least $n$ elements contained in $\mathcal{P}$ which are locally $\mathrm{Sim}$-equivalent to $X$.

By Lemma \ref{lem: key features}, $\mathbf{P}_n\ne\emptyset$. Let $\mathcal{P},\mathcal{Q}\in\mathbf{P}_n$. 
We say $\mathcal{P}\leq\mathcal{Q}$ iff $\mathcal{Q}$ refines $\mathcal{P}$, that is, 
$\forall_{Q\in\mathcal{Q}}\exists_{P\in\mathcal{P}}~Q\subset P$. Then $(\mathbf P_n,\le)$ is a poset. 
Explicitly, a simplex in the associated simplicial complex $Z_n$ is a finite set of vertices which can be 
totally ordered using the partial order on $\mathbf{P}_n$. We write $\{\mathcal{P}_1<...<\mathcal{P}_k\}$ 
for such a $(k-1)$-simplex.

Next we will show that $(\mathbf P_n,\le)$ is directed, which implies that $Z_n$ is contractible. So let 
$\mathcal{P},\mathcal{Q}\in\mathbf P_n$. First, it is easy to see that there is a partition $\mathcal{R}$ 
into open and closed sets which refines both $\mathcal{P}$ and $\mathcal{Q}$. But we have to refine it even 
more so that it satisfies the extra condition. From Lemma \ref{lem: key features} part iii) and ii) we obtain 
that there are at least $n$ disjoint balls $B_1,...,B_s$ such that every ball $B_i$ is contained in some element 
of $\mathcal{R}$ and from part i) we know that every ball $B_i$ is $\mathrm{Sim}$-equivalent to $X$. So we can 
take these balls as elements of a refinement of $\mathcal{R}$.

We endow $Z_n$ with the simplicial $\Gamma$-action 
\[g\{P_1,...,P_k\}=\{g(P_1),...,g(P_k)\}\]
for a vertex $\mathcal{P}=\{P_1,...,P_k\}$ and $g\in \Gamma$. We have $g\mathcal{P}\leq g\mathcal{Q}$ whenever 
$\mathcal{P}\leq\mathcal{Q}$. It follows that the action is indeed simplicial and that whenever $g\in\Gamma$ 
fixes a simplex as a set of vertices, then it fixes it vertex-wise.

We want to take a closer look at the isotropy groups $\Gamma_\sigma$ for 
$\sigma=\{\mathcal{P}_1<...<\mathcal{P}_k\}$ a simplex. First consider the case $k=1$. Write 
$\mathcal{P}=\mathcal{P}_1$. If $g\in\Gamma_\sigma$ then $g\mathcal{P}=\mathcal{P}$ and consequently, there 
is a permutation $\pi$ of the set $\mathcal{P}$ such that $g(P)=\pi(P)$ for every $P\in\mathcal{P}$. Write 
$\Sigma_\mathcal{P}$ for the group of permutations of the set $\mathcal{P}$. We therefore have
\[\Gamma_\sigma=\big\{g\in\Gamma\ |\ \exists_{\pi\in\Sigma_\mathcal{P}}\forall_{P\in\mathcal{P}}~g(P)=
	\pi(P)\big\}.\]
Now let $k\geq 1$ be arbitrary. In this case, we have $g\mathcal{P}_i=\mathcal{P}_i$ for each $i=1,...,k$. 
First we start with a preliminary remark. Let $\mathcal{P}\leq\mathcal{Q}$ be two vertices. Then there is a 
unique function $f:\mathcal{Q}\rightarrow\mathcal{P}$ such that $Q\subset f(Q)$ for all $Q\in\mathcal{Q}$. Let 
$\pi\in\Sigma_\mathcal{Q}$. Then $\pi$ is called $\mathcal{P}$-admissible iff there is a 
$\rho\in\Sigma_\mathcal{P}$ such that for all $P\in\mathcal{P}$ and all $Q\in f^{-1}(P)$ we have 
$f(\pi(Q))=\rho(P)$. In other words, $\pi$ is a permutation of $\mathcal{Q}$ which gives a permutation of 
$\mathcal{P}$ when we write each element in $\mathcal{P}$ as a disjoint union of elements in $\mathcal{Q}$. 
The set of all $\mathcal{P}$-admissible elements forms a subgroup of $\Sigma_\mathcal{Q}$, denoted by 
$\Sigma_{\mathcal{P}\leq\mathcal{Q}}$. More generally, if we have an ascending chain 
$\mathcal{Q}_1\leq...\leq\mathcal{Q}_l$ of vertices, then we can define the subgroup 
$\Sigma_{\mathcal{Q}_1\leq...\leq\mathcal{Q}_l}$ of $\Sigma_{\mathcal{Q}_l}$ consisting of all elements in 
$\Sigma_{\mathcal{Q}_l}$ which are $\mathcal{Q}_i$-admissible for all $i=1,...,l-1$. In particular, we have 
defined a subgroup $\Sigma_\sigma$ of $\Sigma_{\mathcal{P}_k}$ for the simplex 
$\sigma=\{\mathcal{P}_1<...<\mathcal{P}_k\}$ from above. This group is defined exactly in a way such that
\[\Gamma_\sigma=\big\{g\in\Gamma\ |\ \exists_{\pi\in\Sigma_\sigma}\forall_{P\in\mathcal{P}_k}~g(P)=\pi(P)\big\}.\]
The group
\[\Lambda_\sigma=\big\{g\in\Gamma\ |\ \forall_{P\in\mathcal{P}_k}~g(P)=P\big\}\cong
	\prod_{P\in\mathcal{P}_k}\Gamma(\mathrm{Sim}|_P)\]
is a normal subgroup of $\Gamma_\sigma$. It is also of finite index in $\Gamma_\sigma$ because the quotient 
$\Gamma_\sigma/\Lambda_\sigma$ injects into the finite group $\Sigma_\sigma$.

\subsection{K\"unneth theorems}
The following K\"unneth vanishing result was proved in the context of Farber's extended $l^2$-homology 
in~\cite{farber-weinberger}*{3.~Appendix}. For the proof of the exact formulation below 
see~\cite{lück-book}*{Lemma~12.11 on p.~448}.  

\begin{proposition}
	Let $G=G_1\times G_2$ be a product of two groups. Assume that $H_p(G_1,\mathcal{N}(G_1))=0$ for $p\leq n_1$ 
	and $H_p(G_2,\mathcal{N}(G_2))=0$ for $p\leq n_2$. Then we have $H_p(G,\mathcal{N}(G))=0$ for 
	$p\leq n_1+n_2+1$. Note that the case $n_i=-1$ is allowed and gives a non-trivial statement.
\end{proposition}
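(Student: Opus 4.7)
The cleanest route is a direct K\"unneth formula for the group homology of the product. Take projective resolutions $P_\bullet \to \mathbb{Z}$ over $\mathbb{Z}[G_1]$ and $Q_\bullet \to \mathbb{Z}$ over $\mathbb{Z}[G_2]$; the total complex $P_\bullet \otimes_{\mathbb{Z}} Q_\bullet$ is then a projective resolution of $\mathbb{Z}$ over $\mathbb{Z}[G]$. Using the standard factorisation $\mathcal{N}(G) \cong \mathcal{N}(G_1) \widehat{\otimes} \mathcal{N}(G_2)$ as a $\mathbb{Z}[G]$-module, one gets a natural identification of chain complexes
\[\mathcal{N}(G) \otimes_{\mathbb{Z}[G]} (P_\bullet \otimes Q_\bullet) \cong \bigl(\mathcal{N}(G_1) \otimes_{\mathbb{Z}[G_1]} P_\bullet\bigr) \widehat{\otimes} \bigl(\mathcal{N}(G_2) \otimes_{\mathbb{Z}[G_2]} Q_\bullet\bigr).\]

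A K\"unneth theorem for the completed tensor product $\widehat{\otimes}$, valid because the two factor complexes are complexes of $\mathbb{C}$-vector spaces and $\mathcal{N}(G_i)$ is $\mathbb{C}$-flat, then yields an isomorphism
\[H_n\bigl(G, \mathcal{N}(G)\bigr) \cong \bigoplus_{p+q=n} H_p\bigl(G_1, \mathcal{N}(G_1)\bigr) \widehat{\otimes} H_q\bigl(G_2, \mathcal{N}(G_2)\bigr).\]
From here the conclusion is purely combinatorial: for any decomposition $p+q = n$ with $n \le n_1 + n_2 + 1$ one has either $p \le n_1$, killing the first tensor factor by hypothesis, or $p \ge n_1 + 1$, in which case $q \le n - (n_1+1) \le n_2$ and the second tensor factor vanishes. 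The edge case $n_i = -1$ corresponds to imposing no assumption on $G_i$, and the K\"unneth decomposition still delivers the claimed vanishing from the other factor alone.

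The main obstacle is establishing the K\"unneth isomorphism in the analytic setting: one must check that taking homology commutes with $\widehat{\otimes}$ on chain complexes of right $\mathcal{N}(G_i)$-modules, which in turn depends on exactness of $\widehat{\otimes}$ in each variable on the relevant category. This is precisely the technical content worked out in \cite{lück-book}*{Lemma~12.11}; once it is granted the remainder of the argument is formal bookkeeping. An equivalent route that packages the same analytic input differently is to run the Lyndon--Hochschild--Serre spectral sequence for $1 \to G_2 \to G \to G_1 \to 1$ and identify $E^2_{p,q} \cong H_p(G_1, \mathcal{N}(G_1)) \widehat{\otimes} H_q(G_2, \mathcal{N}(G_2))$ via the factorisation above, but the crux -- that $\widehat{\otimes}$ commutes with homology -- is identical.
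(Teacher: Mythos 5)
The paper does not prove this proposition at all: it attributes the result to Farber--Weinberger and refers to \cite{lück-book}*{Lemma~12.11 on p.~448} for a proof of the exact statement. So the right comparison is against those proofs, not against any argument internal to the paper.

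Your proposal rests entirely on the claimed K\"unneth isomorphism
\[H_n\bigl(G, \mathcal{N}(G)\bigr) \cong \bigoplus_{p+q=n} H_p\bigl(G_1, \mathcal{N}(G_1)\bigr) \widehat{\otimes} H_q\bigl(G_2, \mathcal{N}(G_2)\bigr),\]
and this is exactly where the proof has a genuine gap. The algebraic K\"unneth theorem requires that the coefficient functor commute with taking homology, which in the classical case comes from flatness of $\mathbb{Z}$- or $\mathbb{C}$-modules and the \emph{ordinary} tensor product. Here, however, $\mathcal{N}(G)$ is not flat over $\mathbb{Z}[G]$, and once you pass to the completed (spatial) tensor product $\widehat{\otimes}$ on the right-hand side, exactness is lost: completion does not commute with taking kernels and cokernels, so $\widehat{\otimes}$ does not commute with homology of chain complexes. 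Your appeal to ``$\mathcal{N}(G_i)$ is $\mathbb{C}$-flat'' is beside the point --- that is a statement about the algebraic tensor product over $\mathbb{C}$, not about $\widehat{\otimes}$. The whole analytic difficulty of the proposition lives precisely in this step, which you identify but do not resolve.

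Worse, the way you propose to close the gap is circular. You write that the exactness issue ``is precisely the technical content worked out in \cite{lück-book}*{Lemma~12.11}.'' But Lemma~12.11 of that book \emph{is} the present proposition (as the paper's own attribution makes explicit); it is a vanishing result, not a K\"unneth isomorphism, and its proof does not proceed by establishing that $\widehat{\otimes}$ commutes with homology. It uses a different route (essentially dimension-flatness of $\mathcal{N}(G)$ over $\mathcal{N}(G_i)$, the Lyndon--Hochschild--Serre spectral sequence, and a careful analysis of the torsion part, following Farber--Weinberger's framework of extended $l^2$-cohomology). So the plan ``assume the K\"unneth isomorphism, which Lemma~12.11 provides'' both misdescribes the cited lemma and uses the conclusion as an ingredient of its own proof.

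The part of your argument that is sound is the elementary bookkeeping at the end: \emph{if} a K\"unneth decomposition (even with Tor-correction terms in degree $n-1$) were available, the bound $n_1+n_2+1$ would drop out exactly as you say, including the degenerate case $n_i=-1$. But that combinatorics is operating on an isomorphism that has not been established and is, as stated, almost certainly false. You should either supply a genuine proof that $\widehat{\otimes}$ commutes with homology in this setting (which I do not believe exists), or replace the K\"unneth step by the actual mechanism in Farber--Weinberger and L\"uck, namely working in an abelian category where the relevant torsion functor is exact and running the appropriate spectral sequence argument.
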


\begin{corollary}\label{cor: künneth}
	Let $m\geq n\geq 2$ and $G=G_1\times...\times G_m$ be a product of $m$ groups. Assume that 
	$H_0(\bullet,\mathcal{N}(\bullet))$ vanishes for at least $n$ of the groups $G_i$. Then 
	$H_*(G,\mathcal{N}(G))$ vanishes up to degree $n-1$.
\end{corollary}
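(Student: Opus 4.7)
The plan is to iterate the binary Künneth vanishing proposition, exploiting the fact that $n_i=-1$ is allowed and imposes no hypothesis.

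First, after reordering the factors, I may assume that $G_1,\dots,G_n$ are the factors with $H_0\bigl(G_i,\mathcal{N}(G_i)\bigr)=0$. Write $G=H\times K$ where $H=G_1\times\cdots\times G_n$ and $K=G_{n+1}\times\cdots\times G_m$ (with $K$ trivial when $m=n$). The goal reduces to showing $H_p\bigl(H,\mathcal{N}(H)\bigr)=0$ for $p\le n-1$, and then absorbing the remaining factors in $K$ at essentially no cost.

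I would prove the statement for $H$ by induction on $n\ge 2$. For the base case $n=2$, apply the proposition with $n_1=n_2=0$ to conclude vanishing in degrees $p\le 0+0+1=1=n-1$. For the induction step, assume $H_p\bigl(G_1\times\cdots\times G_{n-1},\mathcal{N}(\cdot)\bigr)=0$ for $p\le n-2$, and apply the proposition to $\bigl(G_1\times\cdots\times G_{n-1}\bigr)\times G_n$ with $n_1=n-2$ and $n_2=0$ to get vanishing for $p\le(n-2)+0+1=n-1$.

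Finally, to pass from $H$ to $G=H\times K$ when $m>n$, I would apply the proposition once more to the product $H\times K$ with $n_1=n-1$ (the bound just established for $H$) and $n_2=-1$ (the trivial hypothesis on $K$, valid by assumption). This yields vanishing of $H_p\bigl(G,\mathcal{N}(G)\bigr)$ for $p\le(n-1)+(-1)+1=n-1$, as required. There is no real obstacle here; the only thing to be careful about is remembering that the $n_i=-1$ option is precisely what lets the factors without any $H_0$-hypothesis be incorporated without degrading the vanishing range.
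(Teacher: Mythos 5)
Your proof is correct, and it is the straightforward iteration of the binary Künneth vanishing proposition that the paper implicitly intends (the corollary is stated without proof). The careful use of $n_i=-1$ to absorb the unconstrained factors is exactly the intended mechanism.
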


We also need cohomological versions of these results with coefficients in the group ring. 

\begin{proposition}
	Let $G=G_1\times G_2$ be a product of two groups of type $FP_\infty$. Assume that $H^p(G_1,\bbZ [G_1])=0$ 
	for $p\leq n_1$ and $H^p(G_2,\bbZ[ G_2])=0$ for $p\leq n_2$. Then we have $H^p(G,\bbZ [G])=0$ for 
	$p\leq n_1+n_2+1$. 
\end{proposition}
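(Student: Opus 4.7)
The plan is to adapt the K\"unneth-style argument of the preceding $\calN(G)$-coefficient proposition, replacing its flat tensor product with the algebraic cohomological K\"unneth short exact sequence over $\bbZ$. Since both $G_i$ are of type $FP_\infty$, pick projective resolutions $P_\bullet \to \bbZ$ over $\bbZ[G_1]$ and $Q_\bullet \to \bbZ$ over $\bbZ[G_2]$ in which each $P_p$ and $Q_q$ is finitely generated. The $\bbZ$-tensor product $P_\bullet \otimes_\bbZ Q_\bullet$ is then a finite-type projective resolution of $\bbZ$ over $\bbZ[G]\cong\bbZ[G_1]\otimes_\bbZ\bbZ[G_2]$, so $H^\ast(G,\bbZ[G])$ is computed by the total complex of $\operatorname{Hom}_{\bbZ[G]}(P_\bullet \otimes Q_\bullet, \bbZ[G])$.

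Next, the finite generation of $P_p$ and $Q_q$ is exactly what lets Hom and tensor commute:
\[\operatorname{Hom}_{\bbZ[G]}\bigl(P_p\otimes_\bbZ Q_q,\, \bbZ[G_1]\otimes_\bbZ\bbZ[G_2]\bigr) \;\cong\; \operatorname{Hom}_{\bbZ[G_1]}(P_p,\bbZ[G_1]) \otimes_\bbZ \operatorname{Hom}_{\bbZ[G_2]}(Q_q,\bbZ[G_2]).\]
Writing $C_i^\bullet$ for the factor on the right (a cochain complex of free $\bbZ$-modules, since each term is a direct summand of $\bbZ[G_i]^r$, with $H^\ast(C_i^\bullet) = H^\ast(G_i,\bbZ[G_i])$), we are reduced to computing $H^n(C_1^\bullet \otimes_\bbZ C_2^\bullet)$. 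The cohomological K\"unneth short exact sequence then yields
\[0 \to \bigoplus_{p+q=n} H^p(C_1)\otimes H^q(C_2) \to H^n(C_1\otimes C_2) \to \bigoplus_{p+q=n+1}\operatorname{Tor}_1^\bbZ\bigl(H^p(C_1),H^q(C_2)\bigr) \to 0.\]

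For $n \leq n_1 + n_2 + 1$ the tensor summands vanish by a clean degree count: any nonvanishing summand forces $p \geq n_1+1$ and $q \geq n_2+1$, hence $n = p+q \geq n_1+n_2+2$. The same reasoning kills every Tor summand for $n \leq n_1 + n_2$; the single potentially surviving Tor term sits at bidegree $(n_1+1,n_2+1)$ in total degree $n_1+n_2+1$, and dispatching it is the main obstacle. I would approach it either by reorganising the calculation as one of the two spectral sequences of the double complex and using differentials out of the hypothesised vanishing region to carry the class away before the $E_\infty$ page, or by extracting a torsion-freeness feature of $H^{n_1+1}(G_1,\bbZ[G_1])$ or $H^{n_2+1}(G_2,\bbZ[G_2])$ from the fact that the complexes $C_i^\bullet$ are themselves made of projective $\bbZ[G_i]$-modules, forcing the offending $\operatorname{Tor}_1^\bbZ$ to be zero on the nose.
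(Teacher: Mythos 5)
Your setup mirrors the paper's exactly: finite-type projective resolutions, the cochain cross-product isomorphism, and reduction to the cohomology of $C_1^\bullet\otimes_\bbZ C_2^\bullet$. The K\"unneth short exact sequence you write down (with $\operatorname{Tor}_1^\bbZ$ sitting one degree higher in cohomological total degree) is the correct cohomological reindexing of the standard algebraic K\"unneth theorem, which is the statement the paper cites from Dold. Your degree count is also carried out correctly, and in doing so you have surfaced something the paper glosses over: a pure degree count via this sequence yields $H^m(C_1\otimes C_2)=0$ only for $m\leq n_1+n_2$, while at $m=n_1+n_2+1$ there remains a single candidate term $\operatorname{Tor}_1^\bbZ\bigl(H^{n_1+1}(C_1),H^{n_2+1}(C_2)\bigr)$. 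So flagging this as the main obstacle is exactly right.

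The problem is that neither of your two proposals actually closes that gap, so the proof is left incomplete. The spectral-sequence route cannot work: in either spectral sequence of the double complex the surviving class lives on the anti-diagonal at the corner of the vanishing region, and every $E_2$-group that a differential $d_r$ could map into it from, or out of it to, is already zero by the very hypotheses that isolate the class — so it persists to $E_\infty$. The torsion-freeness route is the one that could succeed, but the justification you give does not hold as stated: each $C_i^p$ being a finitely generated free $\bbZ[G_i]$-module (hence a free $\bbZ$-module) says nothing about torsion in the cohomology $H^{n_i+1}(C_i)$, since a quotient of free abelian groups can certainly have torsion. What is needed is an actual argument that, say, $H^{n_i+1}(G_i,\bbZ[G_i])$ is $\bbZ$-torsion-free under the vanishing hypothesis in lower degrees. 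This can be proved for $n_i=0$ via the long exact sequence attached to $0\to\bbZ[G_i]\xrightarrow{\,p\,}\bbZ[G_i]\to\mathbb{F}_p[G_i]\to 0$ (the $p$-torsion of $H^1(G_i,\bbZ[G_i])$ is a quotient of $H^0(G_i,\mathbb{F}_p[G_i])=0$ since $G_i$ is infinite), and that observation is already enough to justify Corollary~\ref{cor: cohomological kuenneth}, which is all that is used in the paper's application. But an argument for general $n_i$ is not in your write-up, so the last step remains a sketch of two candidate strategies — one of which provably fails — rather than a proof.
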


\begin{proof}
	Let $P_\ast$ be a $\bbZ[G_1]$-resolution of $\bbZ$ such that each $P_i$ is a finitely generated free 
	$\bbZ [G_1]$-module. Let $Q_\ast$ be a similar resolution for $G_2$. Then
	\[C^\ast=\hom_{\bbZ [G_1]}(P_\ast, \bbZ [G_1])\hspace{4mm}\text{and}\hspace{4mm}D^\ast=
		\hom_{\bbZ [G_2]}(Q_\ast, \bbZ [G_2])\]
	are cochain complexes of free abelian groups which compute $H^*(G_1,\bbZ [G_1])$ and $H^*(G_2,\bbZ[ G_2])$ 
	respectively. For $\bbZ[G_i]$-modules $M_i$, $i\in\{1,2\}$, the cochain cross 
	product~\cite{brown-book}*{Chapter~V.2} 
	\begin{equation}\label{eq: cochain cross}
		\hom_{\bbZ[G_1]}(P_\ast, M_1)\otimes_\bbZ \hom_{\bbZ[G_2]}(Q_\ast, M_2)\to 
		\hom_{\bbZ[G]}(P_\ast\otimes_\bbZ Q_\ast, M_1\otimes_\bbZ M_2)
	\end{equation}
	is an isomorphism of cochain complexes since all $P_i$ and $Q_j$ are finitely generated free. If 
	$M_i=\bbZ[G_i]$, then $M_1\otimes_\bbZ M_2\cong \bbZ[G]$ as $\mathbb{Z}[G]$-modules and the right hand 
	side of~\eqref{eq: cochain cross} computes 
	$H^\ast(G, \bbZ [G])$~\cite{brown-book}*{Proposition~(1.1) on p.~107}. By a suitable 
	K\"unneth theorem~\cite{dold}*{Theorem~9.13 on p.~164} and the fact that $C^\ast, D^\ast$ are free as 
	$\bbZ$-modules, the homology of $C^\ast\otimes_\bbZ D^\ast$ vanishes in degrees $\le n_1+n_2+1$. 
\end{proof}

The following corollary follows from $H^0(G,\bbZ[G])\cong \bbZ[G]^G=0$ for infinite $G$.

\begin{corollary}\label{cor: cohomological kuenneth}
	Let $G=G_1\times...\times G_n$ be a product of $n$ infinite $FP_\infty$ groups. Then $H^*(G,\bbZ[G])$ 
	vanishes up to degree $n-1$.
\end{corollary}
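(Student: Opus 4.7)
The plan is to proceed by induction on $n$, combining the preceding Künneth-type vanishing proposition with the hint: for any infinite group $H$, the zeroth cohomology $H^0(H, \bbZ[H]) \cong \bbZ[H]^H$ vanishes, since an element of $\bbZ[H]$ has finite support while being fixed under left multiplication by an infinite group forces it to be zero.

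For the base case $n=1$, the assumption that $G_1$ is infinite gives $H^0(G_1, \bbZ[G_1]) = 0$, which is vanishing up to degree $n-1 = 0$.

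For the inductive step, write $G = G' \times G_n$ with $G' = G_1 \times \cdots \times G_{n-1}$. One first checks that $G'$ is again of type $FP_\infty$: if $P^{(i)}_\ast$ is a resolution of $\bbZ$ by finitely generated free $\bbZ[G_i]$-modules, then the total tensor product $P^{(1)}_\ast \otimes_\bbZ \cdots \otimes_\bbZ P^{(n-1)}_\ast$ is a resolution of $\bbZ$ over $\bbZ[G']$ by finitely generated free modules. By the inductive hypothesis, $H^p(G', \bbZ[G']) = 0$ for $p \le n-2$, and since $G_n$ is infinite we also have $H^0(G_n, \bbZ[G_n]) = 0$. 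Applying the preceding proposition to $G = G' \times G_n$ with $n_1 = n-2$ and $n_2 = 0$ yields $H^p(G, \bbZ[G]) = 0$ for $p \le (n-2) + 0 + 1 = n-1$, completing the induction.

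There is no substantial obstacle here; the content of the corollary is essentially packaging the previous proposition inductively, with the infinite-group observation (the stated hint) supplying the base-level vanishing that seeds each application of the Künneth estimate. The only minor point to verify is that $FP_\infty$ is preserved under finite direct products, which is handled by the tensor-product resolution above.
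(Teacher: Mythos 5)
Your proof is correct and takes essentially the same approach as the paper, which dispatches the corollary in one line by observing that $H^0(G,\bbZ[G])\cong\bbZ[G]^G=0$ for infinite $G$ and then applying the preceding proposition. You have simply made explicit the induction on $n$ and the (routine) verification that $FP_\infty$ is preserved under finite direct products, both of which the paper leaves implicit.
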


\subsection{Proof of Theorem~\ref{thm: main thm}}
Let $n\geq 2$ be arbitrary. Consider the simplicial $\Gamma$-complex $Z_n$ from above. From the discussion it 
follows that we have a spectral sequence $E^k_{pq}$ with
\begin{equation}\label{eq: proof main thm}
	E^1_{pq}=\bigoplus_{\sigma\in\Sigma_p}H_q(\Gamma_\sigma,\mathcal{N}(\Gamma))\Rightarrow 
	H_{p+q}(\Gamma,\mathcal{N}(\Gamma)).
\end{equation}
Fix a simplex $\sigma=\{\mathcal{P}_1<...<\mathcal{P}_p\}$. First observe the group 
$\Lambda_\sigma\cong\prod_{P\in\mathcal{P}_p}\Gamma(\mathrm{Sim}|_P)$ defined above. From the extra condition 
on the vertices we know that at least $n$ elements of $\mathcal{P}_p$ are locally $\mathrm{Sim}$-equivalent to 
$X$ and therefore, by Lemma \ref{lem: restricted subgroups}, at least $n$ of the groups 
$\Gamma(\mathrm{Sim}|_{P})$ with $P\in\mathcal{P}_p$ are isomorphic to $\Gamma$. $\Gamma$ is infinite by 
Proposition \ref{prop: finiteness} and non-amenable by Proposition \ref{prop: non-amenability}. Going back to a 
result by Kesten~\cite{lück-book}*{Lemma~6.36 on p.~258}, this is equivalent to 
$H_0(\Gamma,\mathcal{N}(\Gamma))=0$. By Corollary \ref{cor: künneth} we therefore have 
$H_q(\Lambda_\sigma,\mathcal{N}(\Lambda_\sigma))=0$ for $q=0,...,n-1$. Since $\Lambda_\sigma$ is normal in 
$\Gamma_\sigma$ we have $H_q(\Gamma_\sigma,\mathcal{N}(\Gamma_\sigma))=0$ for $q=0,...,n-1$ by 
\cite{lück-book}*{Lemma~12.11}. Since $\mathcal{N}(\Gamma)$ is a flat ring extension of 
$\mathcal{N}(\Gamma_\sigma)$~\cite{lück-book}*{Theorem~6.29}, it follows that 
\[H_q(\Gamma_\sigma,\mathcal{N}(\Gamma))=0~~\text{for $q\in\{0,...,n-1\}$}.\]
Consequently, the spectral sequence \eqref{eq: proof main thm} collapses except possibly in the region 
$p\geq0$, $q\geq n$ and therefore
\[H_i(\Gamma,\mathcal{N}(\Gamma))=0~~\text{for $i\leq n-1$}.\]
Because $n$ is arbitrary, Theorem~\ref{thm: main thm} follows.

\subsection{Proof of Theorem~\ref{thm: byproduct thm}} 
The proof is similar to the one above and we only describe the necessary modifications. Hughes and Farley 
proved that $\Gamma$ is of type $F_\infty$ (which implies type $FP_\infty$) under the assumptions on 
$\mathrm{Sim}$~\cite{farley-hughes}*{Theorem~1.1} and it is infinite because of Proposition 
\ref{prop: finiteness}. Instead of~\eqref{eq: proof main thm}, we use the cohomological version of Brown's 
spectral sequence with coefficients in the group ring: 
\[E_1^{pq}=\prod_{\sigma\in\Sigma_p} H^q(\Gamma_\sigma, \bbZ[\Gamma])\Rightarrow H^{p+q}(\Gamma, \bbZ[\Gamma]).\]
Write $\mathcal{P}_p=\{P_1,...,P_k\}$ such that the first $n$ elements are locally $\mathrm{Sim}$-equivalent to 
$X$. Observe the normal subgroup
\[\Lambda_\sigma'=\prod_{i=1}^n\Gamma(\mathrm{Sim}|_{P_i})\vartriangleleft\Lambda_\sigma=
	\prod_{i=1}^k\Gamma(\mathrm{Sim}|_{P_i}).\]
By Corollary~\ref{cor: cohomological kuenneth} we obtain $H^q(\Lambda_\sigma',\mathbb{Z}[\Lambda_\sigma'])=0$ 
for $q\in\{0,\ldots, n-1\}$. Since $\bbZ[\Gamma]$ is a free $\bbZ[\Lambda_\sigma']$-module and group cohomology 
of $FP_\infty$-groups commutes with direct limits in the coefficients~\cite{brown-book}*{Theorem~(4.8) on p.~196}, 
we obtain $H^q\bigl(\Lambda_\sigma',\bbZ[\Gamma]\bigr)=0$ for $q\in\{0,\ldots, n-1\}$. Now an application of the 
cohomological Hochschild-Lyndon-Serre spectral sequence to the group extension 
\[1\to\Lambda_\sigma'\to \Lambda_\sigma\to \Lambda_\sigma/\Lambda_\sigma'\to 1\]
and the coefficient module $\bbZ[\Gamma]$ yields $H^q(\Lambda_\sigma,\bbZ[\Gamma])=0$ for $q\in\{0,\ldots, n-1\}$. 
Apply this spectral sequence once more to the group extension
\[1\to\Lambda_\sigma\to\Gamma_\sigma\to\Gamma_\sigma/\Lambda_\sigma\to 1\]
to obtain
\[H^q(\Gamma_\sigma, \bbZ[\Gamma])=0~~\text{for $q\in\{0,\ldots, n-1\}$}.\]
Now proceed as above.

\begin{bibdiv}
\begin{biblist}

\bib{bader-furman-sauer}{article}{
   author={Bader, Uri},
   author={Furman, Alex},
   author={Sauer, Roman},
   title={Weak notions of normality and vanishing up to rank in $L^2$-cohomology},
   journal={to appear in IMRN, {\tt arXiv:1206.4793 [math.GR]} (2012)},
   doi={10.1093/imrn/rnt029}
}

\bib{brown-book}{book}{
   author={Brown, Kenneth S.},
   title={Cohomology of groups},
   series={Graduate Texts in Mathematics},
   volume={87},
   publisher={Springer-Verlag},
   place={New York},
   date={1982}
}

\bib{brown-finiteness}{article}{
   author={Brown, Kenneth S.},
   title={Finiteness properties of groups},
   booktitle={Proceedings of the Northwestern conference on cohomology of
   groups (Evanston, Ill., 1985)},
   journal={J. Pure Appl. Algebra},
   volume={44},
   date={1987},
   number={1-3},
   pages={45--75}
}

\bib{dold}{book}{
   author={Dold, Albrecht},
   title={Lectures on algebraic topology},
   series={Classics in Mathematics},
   note={Reprint of the 1972 edition},
   publisher={Springer-Verlag},
   place={Berlin},
   date={1995}
}

\bib{farber-weinberger}{article}{
   author={Farber, Michael},
   author={Weinberger, Shmuel},
   title={On the zero-in-the-spectrum conjecture},
   journal={Ann. of Math. (2)},
   volume={154},
   date={2001},
   number={1},
   pages={139--154}
}

\bib{farley-hughes}{article}{
   author={Farley, Daniel S.},
   author={Hughes, Bruce},
   title={Finiteness properties of some groups of local similarities},
   journal={{\tt arXiv:1206.2692 [math.GR]}},
   date={2012}
}

\bib{gromov-riemannian}{article}{
   author={Gromov, M.},
   title={Large Riemannian manifolds},
   conference={
      title={Curvature and topology of Riemannian manifolds},
      address={Katata},
      date={1985},
   },
   book={
      series={Lecture Notes in Math.},
      volume={1201},
      publisher={Springer},
      place={Berlin},
   },
   date={1986},
   pages={108--121}
}

\bib{hughes-haagerup}{article}{
   author={Hughes, Bruce},
   title={Local similarities and the Haagerup property},
   note={With an appendix by Daniel S. Farley},
   journal={Groups Geom. Dyn.},
   volume={3},
   date={2009},
   number={2},
   pages={299--315}
}

\bib{hughes-trees}{article}{
   author={Hughes, Bruce},
   title={Trees and ultrametric spaces: a categorical equivalence},
   journal={Adv. Math.},
   volume={189},
   date={2004},
   number={1},
   pages={148--191}
}

\bib{lott}{article}{
   author={Lott, John},
   title={The zero-in-the-spectrum question},
   journal={Enseign. Math. (2)},
   volume={42},
   date={1996},
   number={3-4},
   pages={341--376}
}

\bib{durham}{article}{
   author={L{\"u}ck, Wolfgang},
   title={$L^2$-invariants from the algebraic point of view},
   conference={
      title={Geometric and cohomological methods in group theory},
   },
   book={
      series={London Math. Soc. Lecture Note Ser.},
      volume={358},
      publisher={Cambridge Univ. Press},
      place={Cambridge},
   },
   date={2009},
   pages={63--161}
}

\bib{lück-book}{book}{
   author={L{\"u}ck, Wolfgang},
   title={$L^2$-invariants: theory and applications to geometry and
   $K$-theory},
   series={Ergebnisse der Mathematik und ihrer Grenzgebiete. 3. Folge. A
   Series of Modern Surveys in Mathematics},
   volume={44},
   publisher={Springer-Verlag},
   place={Berlin},
   date={2002}
}

\bib{monod}{article}{
   author={Monod, Nicolas},
   title={On the bounded cohomology of semi-simple groups, $S$-arithmetic
   groups and products},
   journal={J. Reine Angew. Math.},
   volume={640},
   date={2010},
   pages={167--202}
}

\bib{nekrashevych}{article}{
   author={Nekrashevych, Volodymyr V.},
   title={Cuntz-Pimsner algebras of group actions},
   journal={J. Operator Theory},
   volume={52},
   date={2004},
   number={2},
   pages={223--249}
}

\bib{oguni}{article}{
   author={Oguni, Shin-ichi},
   title={The group homology and an algebraic version of the
   zero-in-the-spectrum conjecture},
   journal={J. Math. Kyoto Univ.},
   volume={47},
   date={2007},
   number={2},
   pages={359--369}
}

\bib{röver}{article}{
   author={R{\"o}ver, Claas E.},
   title={Constructing finitely presented simple groups that contain
   Grigorchuk groups},
   journal={J. Algebra},
   volume={220},
   date={1999},
   number={1},
   pages={284--313}
}

\end{biblist}
\end{bibdiv}

\end{document}